\theoremstyle{plane}
\newtheorem{thm}{Theorem}[section]
\newtheorem{prop}[thm]{Proposition}
\newtheorem{lem}[thm]{Lemma}
\newtheorem{fact}[thm]{Fact}
\theoremstyle{definition}
\newtheorem{dfn}[thm]{Definition}
\theoremstyle{remark}
\newtheorem{rmk}[thm]{Remark}
\newcommand{\rank}{\operatorname{rank}}
\newcommand{\sgn}{\operatorname{sgn}}
\newcommand{\R}{\bm{R}}
\newcommand{\Sig}{\Sigma}
\newcommand{\what}{\widehat}
\newcommand{\wtilde}{\widetilde}
\renewcommand{\phi}{\varphi}
\renewcommand{\epsilon}{\varepsilon}
\numberwithin{equation}{section}
\begin{document}
\title[Principal curvatures and parallels of fronts]
{Principal curvatures and parallel surfaces of wave fronts}

\author[K. Teramoto]{Keisuke Teramoto}
\thanks{The author partly supported by the Grant-in-Aid for JSPS Fellows, No. 17J02151.}
\address{Department of Mathematics, 
Graduate School of Science, 
Kobe University, 
Rokko, Kobe 657-8501, Japan}
\email{teramoto@math.kobe-u.ac.jp}

\keywords{principal curvature, singularity, wave front, parallel surface, extended distance squared function}
\subjclass[2010]
{57R45, 53A05, 58K05}
\maketitle
\begin{abstract}
We give criteria for which a principal curvature becomes a bounded $C^\infty$-function at 
non-degenerate singular points of wave fronts by using geometric invariants.
As applications, we study singularities of parallel surfaces and extended distance squared functions of wave fronts.  Moreover, we relate these singularities to some geometric invariants of fronts. 
\end{abstract}

\section{Introduction}
Wave fronts in the Euclidean $3$-space $\R^3$ are surfaces which may have certain singularities. 
Since wave fronts have a well-defined unit normal vector even at singularities, 
they might be considered as generalizations of immersed surfaces in $\R^3$. 
Recently, there have been several studies of wave fronts from differential geometric viewpoints 
(see \cite{hhnsuy,im,krsuy,ms,msuy,nuy,suy3,suy,suy4,t1}, for example). 
In particular, the behavior of Gaussian and mean curvature of wave fronts are well investigated, 
and relations between boundedness of Gaussian curvature near non-degenerate singular points 
and geometric invariants of wave fronts are known (cf. \cite{msuy,suy}). 
For principal curvatures, Murata and Umehara \cite{mu} showed that 
at least one principal curvature is unbounded near a singular point. 
However, another principal curvature may be a bounded $C^\infty$-function. 
Hence it is natural to ask which properties of wave fronts determine boundedness of principal curvatures at singular points.

In this paper, we give an explicit criterion for which a principal curvature becomes a bouded $C^\infty$-function 
near non-degenerate singular points of wave fronts in terms of geometric invariants (Theorem \ref{princi1}). 
(This kind of criteria for the case of cuspidal edges is given in \cite[Proposition 2.2]{t1}.)
For a bounded principal curvature, we can define a principal vector with respect to it. 
On the other hand, the image of the set of non-degenerate singular points (singular locus) is a curve on a wave front.
Thus we can extend the notion of a line of curvature to a singular locus by using the principal vector and 
give a condition for the singular locus to be a line of curvature on wave fronts (Proposition \ref{curvline}).

As an application, we consider singularities of parallel surfaces on wave fronts. 
We studied parallel surfaces of cuspidal edges and gave a characterization for swallowtails appearing on parallel surfaces 
of cuspidal edges in terms of geometric properties of cuspidal edges in \cite{t1}. 
However, we have not characterized other singularities which appear on parallel surfaces of cuspidal edges or 
wave fronts, in their differential geometric contexts.  
Thus we show relations between types of singularities of parallel surfaces on wave fronts and 
geometric properties of initial wave fronts (Theorem \ref{sing_para}). 
To characterize singularities, the notion of ridge points for wave fronts will play important roles. 
(Ridge points for regular surfaces are introduced by Porteous \cite{p1}.) 
In addition, we consider constant principal curvature (CPC) lines near cuspidal edges. 
It is known that CPC lines correspond to the set of singular points of parallel surfaces (\cite{fh1,fh2}). 
Using parallel surfaces, we define special points (landmark in the sense of \cite{p2}) on cuspidal edge as 
cusps of CPC lines, which seems not to have appeared in the literature (Subsection \ref{cpc}). 

Finally, we study the extended distance squared function on wave fronts. 
For the case of generic regular surfaces, singularities of extended distance squared functions correspond to 
types of singularities of parallel surfaces (cf. \cite[Theorem 3.4]{fh1}).
However, for wave fronts, the same statement does not hold, 
in fact, different kinds of singularities ($D$-type) will appear (Theorem \ref{D4front}).  

All maps and functions considered here are of class $C^\infty$ unless otherwise stated. 


\section{Preliminaries}\label{prelim}
\subsection{Wave fronts}
We recall some properties of wave fronts. 
For details, see \cite{agv,fsuy,ifrt,msuy,suy}. 

A map $f:\Sig\to\R^3$ is called a {\it wave front} (or a {\it front}) if 
there exists a unit normal vector $\nu$ to $f$ such that the pair $L_f=(f,\nu):\Sig\to\R^3\times S^2$ gives an immersion, 
where $\Sig\subset(\R^2;u,v)$ is a domain and $S^2$ denotes the unit sphere in $\R^3$ (cf. \cite{agv,krsuy,suy}). 
A map $f:\Sig\to\R^3$ is called a {\it frontal} if just a unit normal vector $\nu$ to $f$ exists. 
A point $p$ is said to be a {\it singular point} of $f$ if $f$ is not an immersion at $p$. 
We denote by $S(f)$ the set of singular points of $f$.

For a frontal $f$, the function $\lambda:\Sig\to\R$ as 
$$\lambda(u,v)=\det(f_u,f_v,\nu)(u,v)\quad (f_u=\partial f/\partial u,\ f_v=\partial f/\partial v)$$
is called the {\it signed area density function} (cf. \cite{krsuy,suy}). 
By the definition of $\lambda$, $S(f)=\lambda^{-1}(0)$ holds. 
We call $p\in S(f)$ {\it non-degenerate} if $d\lambda(p)\neq0$. 
Let $p$ be non-degenerate. 
Then there exist a neighbourhood $V$ of $p$ and a regular curve 
$\gamma:(-\epsilon,\epsilon)\to V$ with $\gamma(0)=p$ such that 
$S(f)\cap V$ is locally parametrized by $\gamma$. 
Moreover, there exists a vector field $\eta$ such that $df(\eta)=\bm{0}$ along $\gamma$. 
We call $\gamma$ and $\eta$ the {\it singular curve} and the {\it null vector field}, respectively. 
Moreover, we call the image of the singular curve $\what{\gamma}=f\circ\gamma$ the {\it singular locus}. 

A non-degenerate singular point $p$ is said to be of the \textit{first kind} if $\eta(0)$ is transverse 
to $\gamma'(0)$, that is, $\det(\gamma',\eta)(0)\neq0$. 
Otherwise, it is said to be of the \textit{second kind} (\cite{msuy}). 
Moreover, we call a non-degenerate singular point of the second kind {\it admissible} if 
the singular curve consists of points of the first kind except at $p$. 
Otherwise, we call $p$ {\it non-admissible}. 

\begin{dfn}
Let $f:(\Sig,p)\to(\R^3,f(p))$ be a map-germ around $p$. 
Then $f$ at $p$ is a \textit{cuspidal edge} if the map-germ $f$ is $\mathcal{A}$-equavalent to the map-germ 
$(u,v)\mapsto(u,v^2,v^3)$ at $\bm{0}$, 
$f$ at $p$ is a \textit{swallowtail} if the map-germ $f$ is $\mathcal{A}$-equivalent to the map-germ 
$(u,v)\mapsto(u,3v^4+uv^2,4v^3+2uv)$ at $\bm{0}$, 
$f$ at $p$ is a \textit{cuspidal butterfly} if the map-germ $f$ is $\mathcal{A}$-equivalent to the map-germ 
$(u,v)\mapsto(u,4v^5+uv^2,5v^4+2uv)$ at $\bm{0}$, 
$f$ at $p$ is a \textit{cuspidal lips} if the map-germ $f$ is $\mathcal{A}$-equivalent to the map-germ 
$(u,v)\mapsto(u,3v^4+2u^2v^2,v^3+u^2v)$ at $\bm{0}$, 
$f$ at $p$ is a \textit{cuspidal beaks} if the map-germ $f$ is $\mathcal{A}$-equivalent to the map-germ 
$(u,v)\mapsto(u,3v^4-2u^2v^2,v^3-u^2v)$ at $\bm{0}$ 
and $f$ at $p$ is a {\it $D_4^+$ singularity} (resp. {\it $D_4^-$ singularity}) 
if the map-germ $f$ is $\mathcal{A}$-equivalent to 
$(u,v)\mapsto(uv,u^2+3v^2,u^2v+ v^3)$ (resp. $(u,v)\mapsto(uv,u^2-3v^2,u^2v-v^3)$) at $\bm{0}$, 
where two map-germs $f,\,g:(\R^2,\bm{0})\to(\R^3,\bm{0})$ are \textit{$\mathcal{A}$-equivalent} 
if there exist diffeomorphism-germs $\theta:(\R^2,\bm{0})\to(\R^2,\bm{0})$ on the source and 
$\Theta:(\R^3,\bm{0})\to(\R^3,\bm{0})$ on the target such that $\Theta\circ f=g\circ\theta$ holds.
\end{dfn}
We note that generic singularities of fronts are cuspidal edges and swallowtails 
and generic singularities of one-parameter bifurcation of fronts are 
cuspidal lips/beaks, cuspidal butterflies and $D_4^\pm$ singularities in addition to above two (see \cite{agv,ifrt}).

\begin{rmk}
Cuspidal edges are non-degenerate singular points of the first kind. 
On the other hand, swallowtails and cuspidal butterflies are 
of the admissible second kind (cf. \cite{msuy}). 
Thus generic singularities of fronts are admissible. 
\end{rmk}

\begin{fact}[\cite{is,ist,krsuy,suy3}]\label{crit_front}
Let $f:(\Sig,p)\to\R^3$ be a front germ, $\nu$ a unit normal to $f$ and $p$ a corank one singular point, 
namely, $\rank{df_p}=1$.
\begin{enumerate}
\item[{\rm(1)}] Suppose that $p$ is a non-degenerate singular point. 
\begin{itemize}
\item $f$ at $p$ is $\mathcal{A}$-equivalent to a cuspidal edge if and only if $\eta\lambda(p)\neq0$.
\item $f$ at $p$ is $\mathcal{A}$-equivalent to a swallowtail if and only if $\eta\lambda(p)=0$ 
and $\eta\eta\lambda(p)\neq0$.
\item $f$ at $p$ is $\mathcal{A}$-equivalent to a cuspidal butterfly 
if and only if $\eta\lambda(p)=\eta\eta\lambda(p)=0$ and $\eta\eta\eta\lambda(p)\neq0$.
\end{itemize}
\item[{\rm(2)}] Suppose that $p$ is a degenerate singular point.
\begin{itemize}
\item $f$ at $p$ is $\mathcal{A}$-equivalent to a cuspidal lips if and only if $\det\mathcal{H}_\lambda(p)>0$.
\item $f$ at $p$ is $\mathcal{A}$-equivalent to a cuspidal beaks if and only if 
$\eta\eta\lambda(p)\neq0$ and $\det\mathcal{H}_\lambda(p)<0$.
\end{itemize}
\end{enumerate}
Here $\lambda$ is the signed area density function, $\eta$ the null vector field 
and $\mathcal{H}_\lambda$ the Hessian matrix of $\lambda$.
\end{fact}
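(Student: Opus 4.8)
The plan is to reduce $f$ to adapted coordinates at the corank one point $p$ and then read off the $\mathcal{A}$-type from the signed area density $\lambda$ together with its derivatives in the null direction. First I would choose coordinates $(u,v)$ centred at $p$ with $f_v(p)=\bm{0}$ and $f_u(p)\neq\bm{0}$, so that $\partial_v$ is the null direction at $p$. Since $f$ is a \emph{front}, $L_f=(f,\nu)$ is an immersion, forcing $\nu_v(p)\neq\bm{0}$; exploiting this one can normalise a suitable adapted frame and the higher jets of $f$ into standard position. This preparation is what lets the recognition criteria for $\mathcal{A}$-singularities be applied.

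For Part (1) the point is non-degenerate, $d\lambda(p)\neq0$, so $S(f)$ is a regular curve $\gamma$ and the null field $\eta$ extends to a neighbourhood. The key principle is that the successive null-derivatives $\eta\lambda,\eta\eta\lambda,\dots$ restricted to $S(f)$ measure the order of contact of the null direction with the singular set, and this order is exactly the $\mathcal{A}$-invariant separating the $A_2$, $A_3$ and $A_4$ fronts. Concretely I would establish the hierarchy
\[
f \text{ is } A_{k+1} \text{ at } p \iff \eta\lambda(p)=\dots=\eta^{k-1}\lambda(p)=0,\ \eta^{k}\lambda(p)\neq0,
\]
which specialises to cuspidal edge ($k=1$), swallowtail ($k=2$) and cuspidal butterfly ($k=3$). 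The underlying reason is that $f$ is the wave front of the Legendre immersion $L_f$, whose singularities are classified by generating families; reducing the generating family to the $A_{k+1}$ normal form and tracking its discriminant translates the generating-family invariant into the order of vanishing of $\lambda$ along $\eta$. Equivalently, once the jets of $f$ are normalised one computes $\lambda(u,v)$ directly and builds the source and target diffeomorphisms realising the $\mathcal{A}$-equivalence under precisely these conditions.

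For Part (2) the point is degenerate, $d\lambda(p)=0$, so $p$ is a critical point of $\lambda$ and the local shape of $S(f)=\lambda^{-1}(0)$ is governed by $\mathcal{H}_\lambda(p)$. If $\det\mathcal{H}_\lambda(p)>0$ the critical point is definite, $\lambda$ does not change sign and $S(f)=\{p\}$ is isolated; this is the cuspidal lips model. If $\det\mathcal{H}_\lambda(p)<0$ the critical point is a saddle, $S(f)$ is a pair of curves meeting transversally at $p$, and the extra requirement $\eta\eta\lambda(p)\neq0$ forces the null direction to be transverse to both branches, which pins down the cuspidal beaks model. I would verify each by putting $\lambda$ into its Morse normal form via the Morse lemma and matching the resulting parametrisation with the explicit cuspidal lips/beaks germs in the Definition.

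The main obstacle is Part (1): making rigorous the equivalence between the abstract $\mathcal{A}$-classification and the intrinsically defined conditions $\eta^{j}\lambda(p)=0$. This needs either the full Legendrian-singularity machinery together with a careful dictionary between generating-family invariants and the null-derivatives of $\lambda$, or a direct and somewhat delicate construction of the diffeomorphisms from the normalised jets. The subtle point in either route is the extension-dependence of the higher null-derivatives $\eta^{j}\lambda$: one must take $\eta$ to be a genuine (extended) null vector field and check that, once the lower-order derivatives vanish, the relevant $\eta^{j}\lambda(p)$ is independent of the remaining freedom in the extension.
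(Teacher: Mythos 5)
This statement is quoted in the paper as a \textbf{Fact} with citations to \cite{is,ist,krsuy,suy3}; the paper itself contains no proof, so your proposal can only be measured against the proofs in those references. Judged that way, your outline correctly identifies the overall shape of the argument (adapted coordinates at a corank one point, frontness forcing $\nu_v(p)\neq\bm{0}$, the $A_{k+1}$ hierarchy in the null derivatives of $\lambda$, and the need to check that $\eta^{j}\lambda(p)$ is independent of the extension of $\eta$ once the lower derivatives vanish), but it has a genuine gap: the central equivalence of Part (1) is asserted, not proved. The displayed hierarchy ``$f$ is $A_{k+1}$ at $p$ iff $\eta\lambda(p)=\dots=\eta^{k-1}\lambda(p)=0$, $\eta^{k}\lambda(p)\neq0$'' \emph{is} the theorem being cited (the cuspidal edge/swallowtail cases are the appendix of \cite{krsuy}, the butterfly and general $A_{k+1}$ cases are \cite{suy3}); saying that one ``would establish'' it via generating families or via a direct construction of the diffeomorphisms defers exactly the hard content. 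Neither the reduction of the generating family to the $A_{k+1}$ normal form, nor the dictionary translating its discriminant data into the order of vanishing of $\lambda$ along $\eta$, nor the direct jet normalization is carried out, so Part (1) remains a plan rather than a proof.

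Part (2) has a sharper problem: the Morse-lemma argument cannot suffice even in principle. Putting $\lambda$ into Morse normal form controls only the plane set $S(f)=\lambda^{-1}(0)$, whereas the conclusion is an $\mathcal{A}$-equivalence of the map-germ $f:(\R^2,p)\to\R^3$ to the lips/beaks models --- a far finer datum. Matching the topology of the singular set (isolated point for lips; two transverse branches with $\eta(p)$ transverse to both for beaks --- your reading of $\eta\eta\lambda(p)\neq0$ as this transversality is correct, since the Hessian cone at a saddle consists exactly of the branch tangents) does not pin down the $\mathcal{A}$-class: one must still show the points along each branch are cuspidal edges, control how the two sheets fit together, and rule out more degenerate germs with the same discriminant picture. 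This is precisely what the Legendrian-unfolding criteria of \cite{is,ist} accomplish, and no substitute for that machinery appears in your write-up. In short, both halves of the Fact are correctly organized but not established; to make this a proof you would need to either reproduce the generating-family arguments of the cited papers or execute the jet-level construction of the source and target diffeomorphisms explicitly.
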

We note that there is a criterion for a {\it cuspidal cross cap} 
which appears on a frontal surface defined as a map-germ $\mathcal{A}$-equivalent to 
$(u,v)\mapsto(u,v^2,uv^3)$ at $\bm{0}$ (\cite[Theorem 1.4]{fsuy}).  

We recall behavior of curvatures of fronts near non-degenerate singular points $p$. 
Let $f:\Sig\to\R^3$ be a front and $\nu$ a unit normal vector. 
Let $K$ and $H$ denote the Gaussian and the mean curvature of a front $f$. 
It is known that $H$ is unbounded near $p$ ({\cite[Corollary 3.5]{suy}}). 
On the other hand, for the Gaussian curvature $K$, it is known that 
$K$ is bounded near $p$ if and only if the second fundamental form 
vanishes along the singular curve $\gamma$ ({\cite[Theorem 3.1]{suy}}). 

Next we recall behavior of principal curvature maps of a front $f$ at singular points.
Let us assume that there are no umbilic points on $V$.
Then there exists a local coordinate system $(U;u,v)$ centered at $p$ 
such that $f_u$ and $\nu_u$ $($resp. $f_v$ and $\nu_v)$ are linearly dependent on $U$. 
In particular, the pair $\{f_u,\nu_u\}$ (resp. $\{f_v,\nu_v\}$) 
does not vanish at the same time ({\cite[Lemma 1.3]{mu}}). 
Such a coordinate system is called a \textit{principal curvature line coordinate} introduced in \cite{mu}. 
For this local coordinate system $(U;u,v)$, we define the maps $\Lambda_i:U\to P^1(\R)$ $(i=1,2)$ 
which are called the {\it principal curvature maps} (\cite{mu}) as 
the proportional ratio of the real projective line $P^1(\R)$ by
\begin{equation}\label{pmap}
\Lambda_1=[-\nu_u:f_u],\quad\Lambda_2=[-\nu_v:f_v].
\end{equation} 
\begin{fact}[{\cite[Lemma 1.7]{mu}}]\label{bhv_pr_map}
Let $f:\Sig\to\R^3$ be a front and $\Lambda_1,\Lambda_2$ be the principal curvature maps. 
Then $p\in \Sig$ is a singular point if and only if either $\Lambda_1(p)=[1:0]$ or 
$\Lambda_2(p)=[1:0]$ holds.
\end{fact}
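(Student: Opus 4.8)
The plan is to reduce the statement to the behaviour of $f_u$ and $f_v$ at $p$, via two elementary observations. First, since $\lambda=\det(f_u,f_v,\nu)$ vanishes exactly when $\{f_u(p),f_v(p)\}$ is linearly dependent, the point $p$ is singular if and only if $\{f_u(p),f_v(p)\}$ is dependent. Second, $\Lambda_1(p)=[1:0]$ holds precisely when $f_u(p)=\bm{0}$: writing $f_u=a\,e$ and $-\nu_u=b\,e$ for a common direction $e$ (legitimate because $f_u\parallel\nu_u$ in the principal curvature line coordinate), one has $\Lambda_1=[b:a]$, and $[b:a]=[1:0]$ forces $a=0$; conversely $f_u(p)=\bm{0}$ gives $\nu_u(p)\ne\bm{0}$ by the non-simultaneous vanishing of $\{f_u,\nu_u\}$ (\cite[Lemma 1.3]{mu}), hence $\Lambda_1(p)=[1:0]$, and similarly for $\Lambda_2$ and $f_v$. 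Thus the assertion becomes: at a point of $U$, the pair $\{f_u,f_v\}$ is linearly dependent if and only if one of $f_u,f_v$ is the zero vector.

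The heart of the argument is the orthogonality relation $\langle f_u,f_v\rangle\equiv0$ on $U$. Differentiating $\langle f_u,\nu\rangle=0$ in $v$ and $\langle f_v,\nu\rangle=0$ in $u$ yields the symmetry $\langle f_u,\nu_v\rangle=-\langle f_{uv},\nu\rangle=\langle f_v,\nu_u\rangle$. At any point where $f_u,f_v\ne\bm{0}$ the principal coordinate condition lets me write $\nu_u=c_1f_u$ and $\nu_v=c_2f_v$ with $c_1,c_2$ smooth there, so the symmetry reduces to $(c_2-c_1)\langle f_u,f_v\rangle=0$. Since $-c_1$ and $-c_2$ are the two principal curvatures, $c_1=c_2$ would force $p$ to be umbilic, which is excluded; hence $\langle f_u,f_v\rangle=0$ on the set of regular points, and by continuity on all of $U$.

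With orthogonality in hand the conclusion follows quickly. As $\nu$ is a unit normal, $f_u,f_v\in\nu^{\perp}$, so $f_u\times f_v$ is parallel to $\nu$ and $\lambda=\det(f_u,f_v,\nu)=(f_u\times f_v)\cdot\nu=\pm\,|f_u\times f_v|$; orthogonality then gives $|\lambda|=|f_u|\,|f_v|$. Therefore $\lambda(p)=0$ if and only if $f_u(p)=\bm{0}$ or $f_v(p)=\bm{0}$, which together with the two observations of the first paragraph proves the claim. I expect the only genuine obstacle to be the orthogonality step: the proportionality factors $c_1,c_2$ need not extend smoothly across $S(f)$, so rather than arguing pointwise on the singular curve I would establish $\langle f_u,f_v\rangle=0$ on the regular locus and pass to $S(f)$ by continuity, using that $S(f)=\lambda^{-1}(0)$ has empty interior.
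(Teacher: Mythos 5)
Your two reductions are sound: $p$ is singular iff $\{f_u(p),f_v(p)\}$ is dependent, and $\Lambda_i(p)=[1:0]$ iff the corresponding partial derivative vanishes at $p$ (given \cite[Lemma 1.3]{mu}), so the Fact is indeed equivalent to ``dependent implies one partial vanishes.'' The symmetry relation $\langle f_u,\nu_v\rangle=-\langle f_{uv},\nu\rangle=\langle f_v,\nu_u\rangle$ and the identity $(c_2-c_1)\langle f_u,f_v\rangle=0$ are also correct. The genuine gap is the final step: your claim that $S(f)=\lambda^{-1}(0)$ has empty interior is not a consequence of the hypotheses, and it is false for fronts in general. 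Take $f(u,v)=(0,0,v)$ with $\nu(u,v)=(\cos u,\sin u,0)$: here $L_f=(f,\nu)$ is an immersion, the coordinates already satisfy the principal conditions ($f_u=\bm{0}$ is trivially parallel to $\nu_u$, $\nu_v=\bm{0}$ to $f_v$, and neither pair vanishes simultaneously), $\Lambda_1=[1:0]\neq[0:1]=\Lambda_2$ everywhere so there are no umbilics, yet $S(f)=V$ and the regular locus is empty. So ``orthogonality on the regular set plus continuity'' proves nothing at points interior to $S(f)$, which is exactly where your identity $|\lambda|=|f_u|\,|f_v|$ would be needed. (In the paper's later setting of non-degenerate singular points the singular set is a curve and your density argument would go through, but the Fact -- and \cite[Lemma 1.7]{mu} -- are stated for arbitrary fronts.)

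A symptom of the problem is that your argument never actually invokes the immersion condition on $L_f$ beyond quoting Lemma 1.3, whereas the statement fails for frontals, so the front condition must carry weight somewhere. The repair is to argue pointwise at the allegedly bad point instead of by density: suppose $p$ is singular with $f_u(p),f_v(p)\neq\bm{0}$. Dependence gives $f_v(p)=af_u(p)$ with $a\neq0$, hence $\langle f_u,f_v\rangle(p)=a\|f_u(p)\|^2\neq0$; the scalars $c_1,c_2$ with $\nu_u=c_1f_u$, $\nu_v=c_2f_v$ are perfectly well defined at $p$ (no interpretation as principal curvatures is required, so your hesitation to use umbilic-freeness there is moot), and your relation $(c_2-c_1)\langle f_u,f_v\rangle=0$ now forces $c_1(p)=c_2(p)=:c$. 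Then $X=a\partial_u-\partial_v$ satisfies $df_p(X)=af_u-f_v=\bm{0}$ and $d\nu_p(X)=c(af_u-f_v)=\bm{0}$, so $dL_f(X)=\bm{0}$, contradicting that $f$ is a front. (Alternatively, $c_1(p)=c_2(p)$ says $\Lambda_1(p)=\Lambda_2(p)$, which is excluded if one reads ``no umbilic points'' pointwise in the sense of \cite{mu}.) This closes the gap, and yields $\langle f_u,f_v\rangle\equiv0$ on $U$ as a corollary rather than a prerequisite. For the record, the paper itself gives no proof to compare against: Fact \ref{bhv_pr_map} is quoted directly from \cite{mu}.
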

By Fact \ref{bhv_pr_map}, one principal curvature function of a wave front is bounded 
and the other is unbounded near a singular point. 

\subsection{Invariants of a cuspidal edge}
Let $f:\Sig\to\R^3$ be a frontal, $p\in \Sig$ a non-degenerate singular point and $\nu$ a unit normal vector. 
Then we can take the following local coordinate system around $p$. 
\begin{dfn}[\cite{krsuy,msuy,suy}]\label{adapt_1}
A local coordinate system $(U;u,v)$ centered at a singular point of the first kind 
(resp. of the second kind) $p$ is called 
\textit{adapted} if it is compatible with the orientation of $\Sig$ and satisfies the following conditions:\\
(1) the $u$-axis is the singular curve,\\
(2) $\eta=\partial_v$ (resp. $\eta=\partial_u+\epsilon(u)\partial_v$ with $\epsilon(0)=0$) 
gives the null vector field on the $u$-axis,\\ 
(3) there are no singular points other than the $u$-axis.
\end{dfn}
Let $p$ be a cuspidal edge and $(U;u,v)$ an adapted coordinate system centered at $p$.
Since $df(\eta)=f_v=\bm{0}$ along the $u$-axis, there exists a map $\phi:U\to\R^3\setminus\{\bm{0}\}$ 
such that $f_v=v\phi$. 
We note that $f_{vv}=\phi$ holds along the $u$-axis. 
Since $\eta\lambda=\det(f_u,\phi,\nu)\neq0$ on the $u$-axis by Fact \ref{crit_front}, 
the pair $\{f_u,\phi,\nu\}$ gives a frame (cf. \cite{msuy,t1}). 
\begin{lem}[{\cite[Lemma 2.1]{t1}}]\label{wein1}
It holds that
\begin{equation*}
\nu_u=\frac{\wtilde{F}\wtilde{M}-\wtilde{G}\wtilde{L}}{\wtilde{E}\wtilde{G}-\wtilde{F}^2}f_u+
\frac{\wtilde{F}\wtilde{L}-\wtilde{E}\wtilde{M}}{\wtilde{E}\wtilde{G}-\wtilde{F}^2}\phi,\ 
\nu_v=\frac{\wtilde{F}\wtilde{N}-v\wtilde{G}\wtilde{M}}{\wtilde{E}\wtilde{G}-\wtilde{F}^2}f_u+
\frac{v\wtilde{F}\wtilde{M}-\wtilde{E}\wtilde{N}}{\wtilde{E}\wtilde{G}-\wtilde{F}^2}\phi,
\end{equation*}
where $\wtilde{E}=\|f_u\|^2$, $\wtilde{F}=\langle{f_u,\phi}\rangle$, $\wtilde{G}=\|\phi\|^2$, 
$\wtilde{L}=-\langle{f_u,\nu_u}\rangle$, $\wtilde{M}=-\langle{\phi,\nu_u}\rangle$ and $\wtilde{N}=-\langle{\phi,\nu_v}\rangle$.
\end{lem}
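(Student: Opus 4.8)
The plan is to express the Weingarten-type formulas by writing the derivatives of the unit normal $\nu_u,\nu_v$ in terms of the frame $\{f_u,\phi\}$, which is legitimate because $\{f_u,\phi,\nu\}$ is a frame (as noted just before the statement, $\eta\lambda=\det(f_u,\phi,\nu)\neq 0$ on the $u$-axis, and hence on a neighborhood by continuity). Since $\nu$ is a unit normal, $\langle\nu,\nu\rangle=1$ gives $\langle\nu_u,\nu\rangle=\langle\nu_v,\nu\rangle=0$, so $\nu_u$ and $\nu_v$ have no $\nu$-component and we may write
\begin{equation*}
\nu_u=a f_u+b\phi,\qquad \nu_v=c f_u+d\phi
\end{equation*}
for functions $a,b,c,d$ to be determined. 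The remaining task is purely linear algebra: recover the coefficients by pairing each equation with $f_u$ and with $\phi$.

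First I would take inner products of $\nu_u=a f_u+b\phi$ with $f_u$ and with $\phi$, obtaining the linear system
\begin{equation*}
\begin{pmatrix}\tilde E & \tilde F\\ \tilde F & \tilde G\end{pmatrix}
\begin{pmatrix}a\\ b\end{pmatrix}
=\begin{pmatrix}\langle\nu_u,f_u\rangle\\ \langle\nu_u,\phi\rangle\end{pmatrix}
=\begin{pmatrix}-\tilde L\\ -\tilde M\end{pmatrix},
\end{equation*}
using the definitions $\tilde E=\|f_u\|^2$, $\tilde F=\langle f_u,\phi\rangle$, $\tilde G=\|\phi\|^2$, and $\tilde L=-\langle f_u,\nu_u\rangle$, $\tilde M=-\langle\phi,\nu_u\rangle$. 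Solving by Cramer's rule over the determinant $\tilde E\tilde G-\tilde F^2$ yields $a=(\tilde F\tilde M-\tilde G\tilde L)/(\tilde E\tilde G-\tilde F^2)$ and $b=(\tilde F\tilde L-\tilde E\tilde M)/(\tilde E\tilde G-\tilde F^2)$, matching the first claimed formula.

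Next I would repeat the pairing for $\nu_v=c f_u+d\phi$. Here the subtlety that makes the statement interesting appears: the natural second-fundamental-form quantities involve $\phi=f_v/v$ rather than $f_v$ directly. I expect $\langle\nu_v,f_u\rangle$ and $\langle\nu_v,\phi\rangle$ to produce the factors of $v$ in the stated formula, so I would carefully track where $v$ enters. Using $f_v=v\phi$, one has $\langle\nu_v,f_u\rangle=-\langle\nu,f_{uv}\rangle$, and differentiating $f_v=v\phi$ gives $f_{uv}=v\phi_u$, so $\langle\nu_v,f_u\rangle=-v\langle\nu,\phi_u\rangle$; comparing this with the definition $\tilde N=-\langle\phi,\nu_v\rangle$ and with $\tilde M=-\langle\phi,\nu_u\rangle$, together with the symmetry relation $\langle\nu_u,\phi\rangle=\langle\nu,\phi\rangle_u-\cdots$, is what converts the right-hand side into the combination $\tilde F\tilde N-v\tilde G\tilde M$ and $v\tilde F\tilde M-\tilde E\tilde N$. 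Solving the same $2\times 2$ system with this right-hand side delivers $c$ and $d$.

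The main obstacle, and the only place requiring genuine care rather than mechanical computation, is bookkeeping the factor $v$: one must use the relation $f_v=v\phi$ consistently and verify that the symmetry of the second fundamental form (equivalently $\langle\nu_u,f_v\rangle=\langle\nu_v,f_u\rangle$) is respected so that the $v$'s land in exactly the positions shown. Once the two linear systems are set up correctly with the right inner products, the conclusion follows immediately from Cramer's rule, so the proof is short modulo this verification.
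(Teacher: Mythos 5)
Your proposal is correct and is essentially the only natural proof: the paper itself gives no argument for this lemma (it is quoted from \cite[Lemma 2.1]{t1}), and the cited proof proceeds exactly as you do, writing $\nu_u,\nu_v$ in the frame $\{f_u,\phi\}$ (legitimate since $\langle\nu_u,\nu\rangle=\langle\nu_v,\nu\rangle=0$ and $\det(f_u,\phi,\nu)\neq0$) and solving the $2\times2$ systems by Cramer's rule. Your bookkeeping of the factor $v$ is also right: from $\langle\nu,\phi\rangle\equiv0$ one gets $\langle\nu,\phi_u\rangle=-\langle\nu_u,\phi\rangle=\tilde M$, hence $\langle\nu_v,f_u\rangle=-\langle\nu,f_{uv}\rangle=-v\langle\nu,\phi_u\rangle=-v\tilde M$, which places the $v$'s exactly where the stated formulas have them.
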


For cuspidal edges, several geometric invariants are studied (for example, see \cite{ms,msuy,nuy,suy,suy4,su}). 
By using an adapted coordinate system $(U;u,v)$ and the frame $\{f_u,\phi,\nu\}$, 
we set the following invariants along the $u$-axis: 
\begin{align*} 
\kappa_s(u)&=\sgn(\lambda_v)\frac{\det(f_u,f_{uu},\nu)}{\|f_u\|^3}(u,0),\ 
\kappa_\nu(u)=\frac{\langle f_{uu},\nu\rangle}{\|f_u\|^2}(u,0),\ 
\kappa_c(u)=\frac{\|f_u\|^{3/2}\det(f_u,\phi,f_{vvv})}{\|f_u\times \phi\|^{5/2}}(u,0),\\
\kappa_t(u)&=\frac{\det(f_u,\phi,f_{uvv})}{\|f_u\times \phi\|^2}(u,0)-
\frac{\det(f_u,\phi,f_{uu})\langle f_u,\phi\rangle}{\|f_u\|^2\|f_u\times \phi\|^2}(u,0).
\end{align*}
$\kappa_s$, $\kappa_\nu$, $\kappa_c$ and $\kappa_t$ 
are called the {\it singular curvature}, the {\it limiting normal curvature}, 
the {\it cuspidal curvature} and the {\it cusp-directional torsion}, respectively. 
See \cite{hhnsuy,ms,msuy,suy,su} for details of their geometric meanings. 
We note that these invariants can be defined on frontals with singular points of the first kind, 
and for $\kappa_\nu$, we can define it at singular points of the second kind (cf. \cite[(1.2)]{msuy}). 
\begin{lem}\label{cuspidalcurv}
Under the above settings, $\kappa_\nu$, $\kappa_c$ and $\kappa_t$ 
can be expressed as 
\begin{equation}
\kappa_\nu(u)=\frac{\wtilde{L}}{\wtilde{E}}(u,0),\quad
\kappa_c(u)
=\pm\frac{2\wtilde{E}^{3/4}\wtilde{N}}{(\wtilde{E}\wtilde{G}-\wtilde{F}^2)^{3/4}}(u,0),\quad
\kappa_t(u)=\pm\frac{\wtilde{E}\wtilde{M}-\wtilde{F}\wtilde{L}}{\wtilde{E}\sqrt{\wtilde{E}\wtilde{G}-\wtilde{F}^2}}(u,0) \label{cusp2}
\end{equation}
along the $u$-axis, where $\pm$ depends on the orientation of the frame $\{f_u,\phi,\nu\}$. 
\end{lem}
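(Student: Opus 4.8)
The plan is to reduce every determinant appearing in the definitions of $\kappa_\nu$, $\kappa_c$, $\kappa_t$ to its normal component with respect to the frame $\{f_u,\phi,\nu\}$, exploiting the fact that $\nu$ is orthogonal to both $f_u$ and $\phi$ along the $u$-axis. First I would record the basic orthogonality relations on the $u$-axis. Since $\langle f_u,\nu\rangle\equiv0$ and $\langle f_v,\nu\rangle\equiv0$ on $U$, and since $f_v=v\phi$ forces $f_{vv}=\phi$ along the $u$-axis, differentiating $\langle f_v,\nu\rangle=0$ in $v$ and setting $v=0$ gives $\langle\phi,\nu\rangle=0$ on the $u$-axis. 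Thus $\nu\perp f_u$ and $\nu\perp\phi$ there, so $\det(f_u,\phi,\nu)=\langle f_u\times\phi,\nu\rangle=\pm\|f_u\times\phi\|$, the sign being the orientation of the frame. I would also invoke the Lagrange identity $\|f_u\times\phi\|^2=\|f_u\|^2\|\phi\|^2-\langle f_u,\phi\rangle^2=\tilde{E}\tilde{G}-\tilde{F}^2$, which converts all norms of cross products into the stated radicands.

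The formula for $\kappa_\nu$ is immediate: differentiating $\langle f_u,\nu\rangle=0$ in $u$ yields $\langle f_{uu},\nu\rangle=-\langle f_u,\nu_u\rangle=\tilde{L}$, and dividing by $\tilde{E}=\|f_u\|^2$ gives $\kappa_\nu=\tilde{L}/\tilde{E}$. The key mechanism for the other two is the following observation: for any vector field $X$, orthogonality of $\nu$ to $f_u$ and $\phi$ on the $u$-axis makes only the $\nu$-component of $X$ survive in the triple product, so
\begin{equation*}
\det(f_u,\phi,X)=\langle X,\nu\rangle\,\det(f_u,\phi,\nu)=\pm\langle X,\nu\rangle\,\|f_u\times\phi\|
\end{equation*}
along the $u$-axis. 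It therefore remains only to compute the three normal components $\langle f_{vvv},\nu\rangle$, $\langle f_{uvv},\nu\rangle$ and $\langle f_{uu},\nu\rangle$.

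For $\kappa_c$ I would differentiate $\langle f_v,\nu\rangle=0$ twice in $v$ to obtain $\langle f_{vvv},\nu\rangle+2\langle f_{vv},\nu_v\rangle+\langle f_v,\nu_{vv}\rangle=0$; evaluating on the $u$-axis where $f_v=\bm{0}$ and $f_{vv}=\phi$ leaves $\langle f_{vvv},\nu\rangle=-2\langle\phi,\nu_v\rangle=2\tilde{N}$. Substituting $\det(f_u,\phi,f_{vvv})=\pm 2\tilde{N}\|f_u\times\phi\|$ into the definition and collecting the powers of $\|f_u\|=\tilde{E}^{1/2}$ and $\|f_u\times\phi\|=(\tilde{E}\tilde{G}-\tilde{F}^2)^{1/2}$ produces the factor $\tilde{E}^{3/4}/(\tilde{E}\tilde{G}-\tilde{F}^2)^{3/4}$, giving the stated expression. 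For $\kappa_t$ I would use $f_{uvv}=\phi_u$ on the $u$-axis (from $f_{vv}=\phi+v\phi_v$) together with the derivative of $\langle\phi,\nu\rangle=0$ in $u$, namely $\langle\phi_u,\nu\rangle=-\langle\phi,\nu_u\rangle=\tilde{M}$, so that $\langle f_{uvv},\nu\rangle=\tilde{M}$, while $\langle f_{uu},\nu\rangle=\tilde{L}$ as above. Substituting $\det(f_u,\phi,f_{uvv})=\pm\tilde{M}\|f_u\times\phi\|$ and $\det(f_u,\phi,f_{uu})=\pm\tilde{L}\|f_u\times\phi\|$ into the two-term definition, factoring out the common sign and $\|f_u\times\phi\|^{-1}$, and combining over the denominator $\tilde{E}$ yields $\pm(\tilde{E}\tilde{M}-\tilde{F}\tilde{L})/(\tilde{E}\sqrt{\tilde{E}\tilde{G}-\tilde{F}^2})$.

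The only delicate point, and the step I expect to require the most care, is the bookkeeping for the third-order derivatives $f_{vvv}$ and $f_{uvv}$: these must be evaluated \emph{on} the $u$-axis after differentiation, where the vanishing of $f_v$ and the identifications $f_{vv}=\phi$, $f_{uvv}=\phi_u$ drastically simplify the expressions. Everything else is a routine substitution using the Lagrange identity and the orientation sign of the frame $\{f_u,\phi,\nu\}$.
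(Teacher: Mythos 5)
Your proof is correct and follows essentially the same route as the paper's: both reduce each determinant to its normal component via $\nu=\pm(f_u\times\phi)/\|f_u\times\phi\|$, use $f_{vv}=\phi$, $f_{uvv}=\phi_u$, $f_{vvv}=2\phi_v$ along the $u$-axis, and finish with the Lagrange identity $\|f_u\times\phi\|^2=\tilde{E}\tilde{G}-\tilde{F}^2$. Your derivation of $\langle f_{vvv},\nu\rangle=2\tilde{N}$ by differentiating $\langle f_v,\nu\rangle=0$ twice is only a cosmetic variant of the paper's step $\tilde{N}=\langle\phi_v,\nu\rangle$, so the two arguments coincide in substance.
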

\begin{proof}
One can check that $\kappa_\nu$ can be expressed as above by defintions of functions.
We show $\kappa_c$ and $\kappa_t$ can be written as the above formulas. 
Since $\nu$ is perpendicular to both $f_u$ and $\phi$, $\nu$ can be written as 
$\nu=\pm(f_u\times\phi)/\|f_u\times\phi\|$. 

First, we show that $\kappa_c$ can be written as above. 
We note that $f_{vvv}=2\phi_v$ holds on the $u$-axis. 
Since $\wtilde{N}=-\langle\phi,\nu_v\rangle=\langle\phi_v,\nu\rangle$, 
$\kappa_c$ on the $u$-axis is expressed as 
$$\kappa_c(u)=\frac{2\wtilde{E}^{3/4}\det(f_u,\phi,\phi_v)}{\|f_u\times\phi\|^{5/2}}(u,0)
=\pm\frac{2\wtilde{E}^{3/4}\langle\nu,\phi_v\rangle}{\|f_u\times\phi\|^{3/2}}(u,0)=
\pm\frac{2\wtilde{E}^{3/4}\wtilde{N}}{(\wtilde{E}\wtilde{G}-\wtilde{F}^2)^{3/4}}(u,0)$$
on the $u$-axis. 

Next, we consider $\kappa_t$. 
Since $f_{uvv}=\phi_u$ and $\langle\phi_u,\nu\rangle=-\langle\phi,\nu_u\rangle=\wtilde{M}$ on the $u$-axis, we see that 
$$\kappa_t(u)=\frac{\det(f_u,\phi,\phi_u)}{\wtilde{E}\wtilde{G}-\wtilde{F}^2}(u,0)-
\frac{\det(f_u,\phi,f_{uu})\wtilde{F}}{\wtilde{E}(\wtilde{E}\wtilde{G}-\wtilde{F}^2)}(u,0)
=\pm\frac{\wtilde{E}\wtilde{M}-\wtilde{F}\wtilde{L}}{\wtilde{E}\sqrt{\wtilde{E}\wtilde{G}-\wtilde{F}^2}}(u,0).$$
\end{proof}

It is known that $\kappa_c(p)$ does not vanish if $p$ is a cuspidal edge (cf. {\cite[Lemma 2.11]{msuy}}).
In particular, $\wtilde{N}$ never vanishes on the $u$-axis by Lemma \ref{cuspidalcurv}. 
Take an adapted coordinate system $(U;u,v)$ with $\eta\lambda(u,0)>0$. 
Then $\sgn(\kappa_c)=\sgn(\wtilde{N})$ holds on the $u$-axis (see Lemma \ref{cuspidalcurv}). 
If $\eta\lambda(u,0)<0$, $\sgn(\kappa_c)=-\sgn(\wtilde{N})$ holds. 

We define the following functions on $U\setminus\{v=0\}$ as
\begin{equation}
\kappa_+=\frac{2(\wtilde{L}\wtilde{N}-v\wtilde{M}^2)}{\wtilde{A}+\wtilde{B}},\quad
\kappa_-=\frac{2(\wtilde{L}\wtilde{N}-v\wtilde{M}^2)}{\wtilde{A}-\wtilde{B}},\label{k1}
\end{equation}
where 
$\wtilde{A}=\wtilde{E}\wtilde{N}-2v\wtilde{F}\wtilde{M}+v\wtilde{G}\wtilde{L},\,
\wtilde{B}=\sqrt{\wtilde{A}^2-4v(\wtilde{E}\wtilde{G}-\wtilde{F}^2)(\wtilde{L}\wtilde{N}-v\wtilde{M}^2)}$. 
(The reason why $\kappa_\pm$ can be defined as these forms is found in \cite[page 55]{t1}.)
These functions are well-defined on $U\setminus\{v=0\}$. 
We remark that $\kappa_+$ (resp. $\kappa_-$) becomes $-\kappa_-$ (resp. $-\kappa_+$) 
if we change $\nu$ to $-\nu$. 
Let $K$ and $H$ be the Gaussian and the mean curvature of $f$ defined on $U\setminus\{v=0\}$. 
Then $K=\kappa_+\kappa_-$ and $2H=\kappa_+ +\kappa_-$ hold. 
Thus we may treat $\kappa_+$ and $\kappa_-$ as {\it principal curvatures} of $f$ defined on $U\setminus\{v=0\}$. 
Here $K$ and $H$ can be expressed as 
$$K=\frac{\wtilde{L}\wtilde{N}-v\wtilde{M}^2}{v(\wtilde{E}\wtilde{G}-\wtilde{F}^2)},\quad
H=\frac{\wtilde{E}\wtilde{N}-2v\wtilde{F}\wtilde{M}+v\wtilde{G}\wtilde{L}}{2v(\wtilde{E}\wtilde{G}-\wtilde{F}^2)}$$
on the set of regular points. 
We note that $\kappa_\pm=H\mp\sqrt{H^2-K}$ hold on the set of regular points. If we take a principal curvature line coordinate (\cite{mu}), then fractional expressions of principal curvature maps 
$\Lambda_i$ $(i=1,2)$ as in \eqref{pmap} coincide with principal curvatures $\kappa_\pm$.
\subsection{Invariants of a singular point of the second kind} 
Let $f:\Sig\to\R^3$ be a frontal, $p$ a non-degenerate singular point of the second kind 
and $\nu$ a unit normal vector to $f$. 
We fix an adapted coordinate system $(U;u,v)$ in the following (see Definition \ref{adapt_1}). 
Taking a null vector field $\eta$, 
there exists a function $\epsilon=\epsilon(u)$ on the $u$-axis with $\epsilon(0)=0$ 
so that $\eta=\partial_u+\epsilon(u)\partial_v$ (see \cite{msuy}). 
(We note that if $p$ is non-admissible, $\epsilon\equiv0$ holds on the $u$-axis, namely, $\eta=\partial_u$.) 
Thus it follows that $df(\eta)=f_u+\epsilon(u)f_v=\bm{0}$ holds along the $u$-axis. 
On the other hand, since the $u$-axis gives the singular curve, 
there exists a $C^\infty$-function $\phi:U\to\R^3\setminus\{\bm{0}\}$ such that $df(\eta)=v\phi$. 
Hence we have $f_u=v\phi-\epsilon f_v$. 
We remark that $\phi,\,f_v$ and $\nu$ are linearly independent 
since $d\lambda=\det(\phi,f_v,\nu)dv\neq0$ holds on the $u$-axis. 
\begin{lem}\label{weinsec}
Under the adapted coordinate system $(U;u,v)$, $\nu_u$ and $\nu_v$ on $U$ can be written as
\begin{equation*}
\nu_u=\frac{\what{F}(v\what{M}-\epsilon\what{N})-\what{G}\what{L}}{\what{E}\what{G}-\what{F}^2}\phi+
\frac{\what{F}\what{L}-\what{E}(v\what{M}-\epsilon\what{N})}{\what{E}\what{G}-\what{F}^2}f_v,\quad
\nu_v=\frac{\what{F}\what{N}-\what{G}\what{M}}{\what{E}\what{G}-\what{F}^2}\phi+
\frac{\what{F}\what{M}-\what{E}\what{N}}{\what{E}\what{G}-\what{F}^2}f_v,
\end{equation*}
where $\what{E}=\|\phi\|^2$, $\what{F}=\langle\phi,f_v \rangle$, $\what{G}=\| f_v\|^2$, 
$\what{L}=-\langle \phi,\nu_u\rangle$, $\what{M}=-\langle\phi,\nu_v\rangle$ and $\what{N}=-\langle f_v,\nu_v\rangle$. 
\end{lem}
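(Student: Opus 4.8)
The plan is to derive this Weingarten-type formula exactly as in Lemma \ref{wein1}, but with respect to the frame $\{\phi,f_v,\nu\}$ attached to a singular point of the second kind. The first step is to note that $\nu_u$ and $\nu_v$ have no $\nu$-component: differentiating $\langle\nu,\nu\rangle=1$ gives $\langle\nu_u,\nu\rangle=\langle\nu_v,\nu\rangle=0$. Next I would check that $\nu$ is orthogonal not only to $f_u,f_v$ but also to $\phi$. Indeed, from $f_u=v\phi-\epsilon f_v$ together with $\langle\nu,f_u\rangle=\langle\nu,f_v\rangle=0$ one obtains $v\langle\nu,\phi\rangle=0$, so $\langle\nu,\phi\rangle=0$ for $v\neq0$ and hence, by continuity, on all of $U$. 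Since $\{\phi,f_v\}$ is linearly independent and spans the plane orthogonal to $\nu$, I may therefore write $\nu_u=a\phi+bf_v$ and $\nu_v=c\phi+df_v$ for some $C^\infty$-functions $a,b,c,d$ to be determined.

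The second step is to pin down these coefficients by pairing against $\phi$ and $f_v$. Three of the four inner products are immediate from the definitions of $\hat{L},\hat{M},\hat{N}$, namely $\langle\nu_u,\phi\rangle=-\hat{L}$, $\langle\nu_v,\phi\rangle=-\hat{M}$ and $\langle\nu_v,f_v\rangle=-\hat{N}$. The only term requiring a separate argument is $\langle\nu_u,f_v\rangle$. For this I would invoke the symmetry of the second fundamental form: differentiating $\langle\nu,f_v\rangle=0$ in $u$ and $\langle\nu,f_u\rangle=0$ in $v$, and using $f_{uv}=f_{vu}$, yields $\langle\nu_u,f_v\rangle=-\langle\nu,f_{uv}\rangle=\langle\nu_v,f_u\rangle$. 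Substituting $f_u=v\phi-\epsilon f_v$ and the already-computed values $\langle\nu_v,\phi\rangle=-\hat{M}$, $\langle\nu_v,f_v\rangle=-\hat{N}$ then gives $\langle\nu_u,f_v\rangle=-(v\hat{M}-\epsilon\hat{N})$.

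The final step is to solve the two linear systems
\begin{align*}
a\hat{E}+b\hat{F}&=-\hat{L}, & a\hat{F}+b\hat{G}&=-(v\hat{M}-\epsilon\hat{N}),\\
c\hat{E}+d\hat{F}&=-\hat{M}, & c\hat{F}+d\hat{G}&=-\hat{N}.
\end{align*}
Their common coefficient determinant is the Gram determinant $\hat{E}\hat{G}-\hat{F}^2=\|\phi\|^2\|f_v\|^2-\langle\phi,f_v\rangle^2$, which is strictly positive by the linear independence of $\phi$ and $f_v$, so the systems are uniquely solvable. Applying Cramer's rule reproduces precisely the stated coefficients of $\phi$ and $f_v$ in $\nu_u$ and $\nu_v$, finishing the proof.

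The computation is entirely routine once the orthogonality $\langle\nu,\phi\rangle=0$ has been established, so the only conceptual point — and the step I expect to be the main obstacle — is correctly handling the off-diagonal entry $\langle\nu_u,f_v\rangle$. Unlike the regular or first-kind situation, here the coordinate vector $f_u$ is replaced by the combination $v\phi-\epsilon f_v$, so the symmetry $\langle\nu_u,f_v\rangle=\langle\nu_v,f_u\rangle$ produces the asymmetric-looking quantity $-(v\hat{M}-\epsilon\hat{N})$ rather than a single fundamental-form coefficient. Keeping track of the factors $v$ and $\epsilon$ throughout is where care is required, and it is exactly these factors that give the formula for $\nu_u$ its distinctive shape compared with Lemma \ref{wein1}.
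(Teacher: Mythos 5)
Your proof is correct: the orthogonality $\langle\nu,\phi\rangle=0$ (obtained from $f_u=v\phi-\epsilon f_v$ by continuity), the symmetry argument $\langle\nu_u,f_v\rangle=-\langle\nu,f_{uv}\rangle=\langle\nu_v,f_u\rangle=-(v\hat{M}-\epsilon\hat{N})$, and Cramer's rule with the Gram determinant $\hat{E}\hat{G}-\hat{F}^2>0$ (guaranteed by the linear independence of $\phi$ and $f_v$, which the paper deduces from $d\lambda=\det(\phi,f_v,\nu)dv\neq0$) yield exactly the stated coefficients. The paper states Lemma \ref{weinsec} without proof, treating it as the routine analogue of Lemma \ref{wein1} (cited from \cite{t1}), and your frame-decomposition computation is precisely the intended argument, including the one genuinely non-obvious point you correctly isolate, namely the off-diagonal entry coming from $f_u=v\phi-\epsilon f_v$.
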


We now define two $C^\infty$-functions on $U\setminus\{v=0\}$ by 
\begin{equation}
\kappa_+=\frac{2((\what{L}+\epsilon(u)\what{M})\what{N}-v\what{M}^2)}{\what{A}+\what{B}},\quad
\kappa_-=\frac{2((\what{L}+\epsilon(u)\what{M})\what{N}-v\what{M}^2)}{\what{A}-\what{B}},\label{prinsec}
\end{equation}
where 
\begin{align*}
\what{A}&=\what{G}(\what{L}+\epsilon(u)\what{M})-2v\what{F}\what{M}+v\what{E}\what{N},\\
\what{B}&=\sqrt{\what{A}^2-4v(\what{E}\what{G}-\what{F}^2)\left((\what{L}+\epsilon(u)\what{M})\what{N}
-v\what{M}^2\right)}\label{sec_B}.
\end{align*}
Since the Gaussian curvature $K$ and the mean curvature $H$ of $f$ satisfy 
$K=\kappa_+\kappa_-$ and $2H=\kappa_++\kappa_-$, 
we may regard $\kappa_\pm$ as {\it principal curvatures} of $f$ on $U\setminus\{v=0\}$, 
where $K$ and $H$ are written as 
$$K=\frac{(\what{L}+\epsilon(u)\what{M})\what{N}-v\what{M}^2}{v(\what{E}\what{G}-\what{F}^2)},\quad
H=\frac{\what{G}(\what{L}+\epsilon(u)\what{M})-2v\what{F}\what{M}+v\what{E}\what{N}}{2v(\what{E}\what{G}-\what{F}^2)}$$
on $U\setminus\{v=0\}$. 
We remark that $\kappa_\pm=H\mp\sqrt{H^2-K}$ hold on the set of regular points. 

We put $\what{H}=vH$. 
This is a $C^\infty$-function on $U$. 
It follows that 
\begin{equation}
2\what{H}=\frac{\what{G}(\what{L}+\epsilon(u)\what{M})}{\what{E}\what{G}-\what{F}^2}\label{hat_h}
\end{equation}
holds along the $u$-axis (cf. \cite{msuy}). 
We note that $\what{L}+\epsilon(u)\what{M}=-\langle\phi,\eta\nu\rangle$ holds. 
It is known that $2\what{H}$ does not vanish on the $u$-axis if and only if $f$ is a front (\cite[Proposition 3.2]{msuy}).
We set 
$$\mu_c(p)=2\what{H}(p)\left(=\frac{\what{G}(p)\what{L}(p)}{\|\phi(p)\times f_v(p)\|^2}\right).$$ 
This is a geometric invariant called the {\it normalized cuspidal curvature} defined in \cite{msuy}. 
By \eqref{hat_h} and the definition of $\mu_c(p)$, 
we see that $\sgn(\mu_c(p))=\sgn(\what{L}(p))$ and $\what{L}(p)\neq0$ hold if $f$ is a front.
\begin{lem}\label{limnormalsec}
Under the above conditions, the limiting normal curvature $\kappa_\nu$ can be written as 
$\kappa_\nu=\what{N}/\what{G}$ at $p$ if $p$ is of the admissible second kind.
\end{lem}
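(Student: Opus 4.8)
The plan is to exploit the fact that $\kappa_\nu$ is, by its definition, the normal curvature of the image of the singular curve, so that at any point $(u,0)$ of the first kind it is given by the same formula $\kappa_\nu(u)=\langle f_{uu},\nu\rangle/\|f_u\|^2$ as in the cuspidal edge case (this quantity is invariant under reparametrization of the singular curve, hence coordinate independent wherever the image curve is regular). Since $p$ is of the \emph{admissible} second kind, every singular point $(u,0)$ with $u\neq0$ near $p$ is of the first kind, and there $\epsilon(u)\neq0$, so $u\mapsto f(u,0)$ is a regular space curve and the formula applies. Thus I would treat $\kappa_\nu(p)$ as the limit of $\kappa_\nu(u)$ as $u\to0$ and compute this limit using the structure relation $f_u=v\phi-\epsilon f_v$ together with the orthogonality relations $\langle f_v,\nu\rangle=0$ and $\langle\phi,\nu\rangle=0$, both of which hold on all of $U$ by continuity from the regular set $\{v\neq0\}$.

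First I would restrict everything to the $u$-axis. From $f_u=v\phi-\epsilon f_v$ we get $f_u=-\epsilon f_v$ at $v=0$, hence $\|f_u\|^2=\epsilon^2\hat{G}$. Differentiating the same relation once in $u$ and once in $v$ and setting $v=0$ yields
\[
f_{uu}=-\epsilon' f_v-\epsilon f_{uv},\qquad f_{uv}=\phi-\epsilon f_{vv}.
\]
Pairing with $\nu$ and using $\langle f_v,\nu\rangle=0$, $\langle\phi,\nu\rangle=0$ and $\hat{N}=\langle f_{vv},\nu\rangle$ (the last obtained by differentiating $\langle f_v,\nu\rangle=0$ in $v$), I obtain $\langle f_{uv},\nu\rangle=-\epsilon\hat{N}$ and therefore $\langle f_{uu},\nu\rangle=\epsilon^2\hat{N}$. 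Consequently
\[
\kappa_\nu(u)=\frac{\langle f_{uu},\nu\rangle}{\|f_u\|^2}(u,0)=\frac{\epsilon^2\hat{N}}{\epsilon^2\hat{G}}=\frac{\hat{N}}{\hat{G}}
\]
for every $u\neq0$ near $0$.

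Finally, since $\phi$, $f_v$, $\nu$ form a frame along the $u$-axis, $\hat{G}=\|f_v\|^2$ does not vanish near $p$, so $\hat{N}/\hat{G}$ is a $C^\infty$-function there; letting $u\to0$ gives $\kappa_\nu(p)=\hat{N}(p)/\hat{G}(p)$, as claimed. The step I expect to be the main obstacle is precisely this limiting argument: because $f_u(p)=\bm{0}$, the expression $\langle f_{uu},\nu\rangle/\|f_u\|^2$ is of indeterminate form $0/0$ at $p$, and the whole point is that the factor $\epsilon^2$ cancels between numerator and denominator. This cancellation is where admissibility ($\epsilon\neq0$ for $u\neq0$ with $\epsilon(0)=0$) and the relation $\langle\phi,\nu\rangle=0$ are essential; without them the limit need neither exist nor equal $\hat{N}/\hat{G}$.
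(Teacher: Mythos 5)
Your proposal is correct and is essentially the paper's own argument: the paper's proof cites \cite[Proposition 1.9]{msuy} and the relations $f_u=v\phi-\epsilon(u)f_v$, $f_{uu}=v\phi_u-\epsilon'(u)f_v-\epsilon(u)f_{uv}$, $f_{uv}=\phi+v\phi_v-\epsilon(u)f_{vv}$, which at $v=0$ give precisely your cancellation $\langle f_{uu},\nu\rangle/\|f_u\|^2=\epsilon^2\hat{N}/(\epsilon^2\hat{G})=\hat{N}/\hat{G}$ and then the value at $p$ by letting $u\to0$. The only difference is that you spell out the parametrization-invariance and limiting argument along the first-kind points that the paper delegates to the citation.
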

\begin{proof}
By \cite[Proposition 1.9]{msuy}, 
$f_u=v\phi-\epsilon(u)f_v$, $f_{uu}=v\phi_u-\epsilon'(u)f_v-\epsilon(u)f_{uv}$ and 
$f_{uv}=\phi+v\phi_v-\epsilon(u)f_{vv}$, we get the conclusion.
\end{proof}

\section{Principal curvatures, principal vectors and ridge points}\label{pf}
\subsection{Boundedness of a principal curvature}
In this subsection, 
we consider boundedness of principal curvatures of fronts by using the above arguments. 
\begin{thm}\label{princi1}
Let $f:\Sig\to\R^3$ be a front and $p$ a non-degenerate singular point. 
\begin{itemize}
\item[{\rm(1)}]
Let $p$ be a cuspidal edge. 
If $\eta\lambda(p)\kappa_c(p)>0$, 
then the principal curvature $\kappa_+$ is a bounded $C^\infty$-function at $p$. 
Moreover, $\kappa_+(p)=\kappa_\nu(p)$.
\item[{\rm(2)}]
Let $p$ be of the second kind.  
If $\mu_c(p)>0$, then the principal curvature $\kappa_+$ is a bounded $C^\infty$-function at $p$. 
Moreover, $\kappa_+(p)=\kappa_\nu(p)$ if $p$ is an admissible.
\end{itemize}
Converses are also true. 
Moreover, if one of $\kappa_\pm$ is bounded at $p$, then the another is unbounded.
\end{thm}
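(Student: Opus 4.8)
The plan is to analyze the explicit formulas for $\kappa_\pm$ given in \eqref{k1} (cuspidal edge case) and \eqref{prinsec} (second kind case), and to track the behavior of numerator, denominator, and the discriminant-type quantity $\tilde B$ (resp. $\hat B$) as $v\to0$. The key observation driving everything is that boundedness is controlled by whether a factor of $v$ cancels between numerator and denominator. For the cuspidal edge case, I would first compute the limit of $\tilde A$ along the $u$-axis: since $\tilde A=\tilde E\tilde N-2v\tilde F\tilde M+v\tilde G\tilde L$, we have $\tilde A(u,0)=\tilde E\tilde N(u,0)$, which is nonzero because $\tilde N\neq0$ on the $u$-axis (this follows from $\kappa_c(p)\neq0$ together with Lemma~\ref{cuspidalcurv}). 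Consequently $\tilde B(u,0)=|\tilde A(u,0)|=|\tilde E\tilde N|(u,0)$, so exactly one of $\tilde A\pm\tilde B$ vanishes to order $v$ at $v=0$ while the other tends to $2\tilde E\tilde N$.

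Next I would split into the two sign subcases. When $\eta\lambda(p)>0$ we have $\sgn(\kappa_c)=\sgn(\tilde N)$, so $\kappa_c(p)>0$ forces $\tilde N(u,0)>0$, hence $\tilde A(u,0)>0$ and $\tilde A+\tilde B\to2\tilde E\tilde N\neq0$. The numerator $2(\tilde L\tilde N-v\tilde M^2)$ tends to $2\tilde L\tilde N(u,0)$, so $\kappa_+=2(\tilde L\tilde N-v\tilde M^2)/(\tilde A+\tilde B)$ extends as a bounded $C^\infty$-function with limit $\tilde L\tilde N/(\tilde E\tilde N)(u,0)=\tilde L/\tilde E(u,0)=\kappa_\nu(p)$ by Lemma~\ref{cuspidalcurv}. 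The $C^\infty$-ness needs a short argument: since $\tilde A+\tilde B$ is nonvanishing at $p$ and $\tilde B$ is smooth there (its radicand is bounded away from zero), the quotient is smooth. Meanwhile $\kappa_-=2(\tilde L\tilde N-v\tilde M^2)/(\tilde A-\tilde B)$ has a denominator vanishing like $v$ while its numerator does not, giving unboundedness; this establishes the final sentence that if one of $\kappa_\pm$ is bounded the other is unbounded. The case $\eta\lambda(p)<0$ is handled identically after noting $\sgn(\kappa_c)=-\sgn(\tilde N)$, which swaps which root of $\tilde A\pm\tilde B$ survives, accounting for the parenthetical alternative $\kappa_-(p)=\kappa_\nu(p)$.

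For part (2), the second-kind case, I would run the parallel argument with \eqref{prinsec}: here $\hat A(u,0)=\hat G(\hat L+\epsilon\hat M)(u,0)=\hat G\hat L(u,0)$ since $\epsilon(0)=0$, and $\mu_c(p)=2\hat H(p)=\hat G\hat L/(\hat E\hat G-\hat F^2)(p)$ by \eqref{hat_h}, so $\mu_c(p)>0$ is exactly the condition $\hat A(p)>0$ together with $\hat L(p)\neq0$. Then $\hat B(p)=|\hat A(p)|=\hat A(p)$, the combination $\hat A+\hat B\to2\hat G\hat L\neq0$, and $\kappa_+$ extends smoothly and boundedly. The identification $\kappa_+(p)=\kappa_\nu(p)$ in the admissible case uses Lemma~\ref{limnormalsec}, which gives $\kappa_\nu=\hat N/\hat G$: computing $\lim_{v\to0}\kappa_+$ from the numerator $2(\hat L\hat N)$ over $2\hat G\hat L$ yields $\hat N/\hat G$ as required. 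The converses follow by reversing each implication, since in every case the boundedness of $\kappa_+$ is equivalent to the nonvanishing of the relevant limit ($\tilde N(p)$, resp. $\hat L(p)$) with the correct sign.

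The main obstacle I anticipate is the $C^\infty$ (not merely bounded) claim: establishing that the surviving quotient is genuinely smooth across $v=0$ requires knowing that $\tilde B$ (resp. $\hat B$) is a smooth function near $p$, which in turn requires its radicand to be smooth and strictly positive there. The positivity at $p$ is clear from $\tilde A(p)^2>0$, but I must check the radicand stays positive in a full neighborhood and that the square root introduces no loss of smoothness; this is where a careful local estimate, rather than a pointwise limit, is needed. A secondary subtlety is correctly bookkeeping the sign of $\kappa_c$ against $\tilde N$ as dictated by the remark following Lemma~\ref{cuspidalcurv}, to ensure the labeling of $\kappa_+$ versus $\kappa_-$ matches the theorem statement in each of the $\eta\lambda\gtrless0$ branches.
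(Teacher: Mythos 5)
Your route is essentially the paper's: both arguments hinge on evaluating $\tilde A$ on the $u$-axis to get $\tilde A(u,0)=\tilde E\tilde N(u,0)$, observing $\tilde B(u,0)=|\tilde A(u,0)|$ so that $\tilde A\pm\tilde B=\tilde E(\tilde N\pm|\tilde N|)$, using the sign dictionary $\sgn(\kappa_c)=\pm\sgn(\tilde N)$ from Lemma \ref{cuspidalcurv} to decide which branch of \eqref{k1} survives, and identifying the limit $\tilde L/\tilde E=\kappa_\nu$ (resp.\ $\hat N/\hat G=\kappa_\nu$ via Lemma \ref{limnormalsec} in the second-kind case, with $\hat A+\hat B=2\hat G(\hat L+\epsilon\hat M)\neq0$ along the axis guaranteed by $\mu_c(p)>0$, i.e.\ by \eqref{hat_h} and the front condition). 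Your worry about smoothness of $\tilde B$ resolves as you suspect: the radicand equals $(\tilde E\tilde N)^2>0$ at $p$ because $\tilde N\neq0$ on the singular curve of a cuspidal edge, so it is bounded away from zero on a neighborhood and the square root is $C^\infty$ there.

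There is, however, one genuine flaw: your argument for the unboundedness of $\kappa_-$ asserts that its numerator $2(\tilde L\tilde N-v\tilde M^2)$ does not vanish at $p$. That is false in general, since $\tilde L(p)=0$ exactly when $\kappa_\nu(p)=0$, a case that genuinely occurs (e.g.\ cuspidal edges with bounded Gaussian curvature have $\tilde L\equiv0$ along the singular curve); then both the numerator and $\tilde A-\tilde B$ vanish at $p$, and indeed $\tilde A-\tilde B=4v(\tilde E\tilde G-\tilde F^2)(\tilde L\tilde N-v\tilde M^2)/(\tilde A+\tilde B)$ shows the denominator vanishes to \emph{higher} order than $v$ precisely when the numerator degenerates, so your order count collapses to an indeterminate comparison. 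The paper avoids this entirely by invoking the unboundedness of the mean curvature near a non-degenerate singular point of a front (\cite[Corollary 3.5]{suy}): since $2H=\kappa_++\kappa_-$ and $\kappa_+$ is bounded, $\kappa_-$ must blow up. Alternatively you can repair your computation directly from the displayed formula $H=\tilde A/\bigl(2v(\tilde E\tilde G-\tilde F^2)\bigr)$, whose numerator tends to $\tilde E\tilde N(p)\neq0$, so $H$ blows up like $1/v$ regardless of whether $\tilde L(p)$ vanishes. With that one step replaced, your proof is complete and coincides with the paper's.
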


\begin{proof}
We prove the first asserion. 
Let $f:\Sig\to\R^3$ be a front and $p$ a cuspidal edge. 
Take an adapted coordinate system $(U;u,v)$ centered at $p$. 
We show the case of $\eta\lambda(u,0)>0$. 
In this case, $\sgn(\kappa_c)=\sgn(\wtilde{N})$ holds along the $u$-axis. 
For the case of $\eta\lambda(u,0)<0$, one can show similarly. 

We now assume that $\kappa_c(p)>0$. 
Then $\wtilde{N}(p)>0$ by \eqref{cusp2}. 
Since $\wtilde{A}\pm\wtilde{B}=\wtilde{E}(\wtilde{N}\pm|\wtilde{N}|)$ and \eqref{k1}, 
we see that $\kappa_+$ is a bounded $C^\infty$-function on $U$ and 
$\kappa_+=\wtilde{L}/\wtilde{E}=\kappa_\nu$ holds at $p$. 
Conversely, we assume that the principal curvature $\kappa_+$ is a bounded $C^\infty$-function near $p$. 
In this case, it follows that $\wtilde{N}=-\langle\phi,\eta\nu\rangle$ is positive along the $u$-axis. 
This implies that $\eta\lambda\cdot\kappa_c$ is positive along the $u$-axis by \eqref{cusp2}. 
Unboundedness of $\kappa_-$ near $p$ follows from the fact that the mean curvature is unbounded near $p$. 

Next, we prove the second assertion. 
Take an adapted coordinate system $(U;u,v)$ centered at a non-degenerate singular point of the second kind $p$. 
Suppose that $\mu_c(p)=2\what{H}(p)>0$. 
It follows that $-\langle\phi,\eta\nu\rangle>0$ holds near $p$ from \eqref{hat_h}. 
Since
$\what{A}=\what{G}(-\langle\phi,\eta\nu\rangle)$, $\what{B}=|\what{A}|$ and 
$-\langle\phi,\eta\nu\rangle>0$ along the $u$-axis, 
it follows that $\what{A}+\what{B}=2\what{G}(-\langle\phi,\eta\nu\rangle)\neq0$ 
and $A-B=0$ hold on the $u$-axis. 
Hence by \eqref{prinsec}, we have $\kappa_+=\what{N}/\what{G}$ along the $u$-axis, and $\kappa_+$ is  
a bounded $C^\infty$-function. 
By Lemma \ref{limnormalsec}, we see that $\kappa_+=\kappa_\nu$ at $p$ if $p$ is admissible. 
The converse and unboundedness can be shown by using similar arguments to the first assertion. 
\end{proof}

\begin{rmk}\label{rmk:hatkappa}
We assume that $\kappa_+$ is bounded near non-degenerate singular point $p$. 
Although $\kappa_-$ is unbounded near $p$, $\lambda\kappa_-$ is bounded near $p$. 
In fact, $\kappa_-$ can be rewritten as 
$$\kappa_-=
\begin{cases}
\frac{\wtilde{A}+\wtilde{B}}{2v(\wtilde{E}\wtilde{G}-\wtilde{F}^2)}\quad (p:\ \textrm{cuspidal edge})\\
\frac{\what{A}+\what{B}}{2v(\what{E}\what{G}-\what{F}^2)} \quad (p:\ \textrm{second kind})
\end{cases}
$$
on $U\setminus\{v=0\}$ (cf. \cite{t1}). 
Thus $\lambda\kappa_-$ is written as 
$$\lambda\kappa_-=
\begin{cases}
\frac{\wtilde{A}+\wtilde{B}}{2\sqrt{\wtilde{E}\wtilde{G}-\wtilde{F}^2}} \quad (p:\ \textrm{cuspidal edge})\\
\frac{\what{A}+\what{B}}{2\sqrt{\what{E}\what{G}-\what{F}^2}}\quad (p:\ \textrm{second kind}).
\end{cases}
$$
In particular, $\lambda(p)\kappa_-(p)$ is proportional to $\kappa_c(p)$ when $p$ is a cuspidal edge, 
and $\lambda(p)\kappa_-(p)$ is proportional to $\mu_c(p)$ when $p$ is of the second kind. 
Thus $\lambda(p)\kappa_-(p)$ does not vanish.
\end{rmk}

\subsection{Principal vectors and ridge points}
By Theorem \ref{princi1}, one of $\kappa_\pm$ of fronts can be defined 
as a bounded $C^\infty$-function near non-degenerate singular points. 
This implies there is a principal vector with respect to such a principal curvature at the singular point. 
Hence we consider explicit representation of the principal vector under an adapted coordinate system. 

Let $f:\Sig\to\R^3$ be a front, $p$ a singular point of the second kind 
and $\nu$ a unit normal vector to $f$. 
Then we take an adapted coordinate system $(U;u,v)$ around $p$. 
Assume that $\mu_c(p)>0$, namely, 
$\kappa_+$ is a bounded $C^\infty$-function near $p$ in the following. 
We investigate the principal vector relative to $\kappa_+$. 

Let $I$ and $II$ denote the first and the second fundamental matrices given by 
$$I=\begin{pmatrix}
\langle f_u,f_u\rangle & \langle f_u,f_v\rangle\\
\langle f_u,f_v\rangle & \langle f_v,f_v\rangle
\end{pmatrix},\quad
II=\begin{pmatrix}
-\langle f_u,\nu_u\rangle & -\langle f_u,\nu_v\rangle\\
-\langle f_v,\nu_u\rangle & -\langle f_v,\nu_v\rangle
\end{pmatrix}.
$$
The principal vector $\bm{v}=(v_1,v_2)$ with respect to $\kappa_+$ is a never vanishing vector satisfying 
$(II-\kappa_+I)\bm{v}=\bm{0}$. 
We can write this equation as 
\begin{equation}\label{p_dir_sw}
\begin{pmatrix}
v\{\what{L}-\kappa_+(v\what{E}-\epsilon\what{F})\} & 
v(\what{M}-\kappa_+\what{F})\\
v(\what{M}-\kappa_+\what{F})-\epsilon(\what{N}-\kappa_+\what{G}) & \what{N}-\kappa_+\what{G}
\end{pmatrix}
\begin{pmatrix}
v_1 \\ v_2
\end{pmatrix}
=
\begin{pmatrix}
0\\0
\end{pmatrix}.
\end{equation}
We note that $\what{L}$ does not vanish at $p$. 
Thus we can take the principal vector $\bm{v}$ as 
\begin{equation}\label{pdir_sec}
\bm{v}=(-\what{M}+\kappa_+\what{F},
\what{L}-\kappa_+(v\what{E}-\epsilon\what{F})),
\end{equation}
by factoring out $v$ from \eqref{p_dir_sw}. 
For the case of cuspidal edges, the principal vector $\bm{v}$ with respect to $\kappa_+$ is given as follows \cite{t1}: 
\begin{equation}
\bm{v}=(\wtilde{N}-v\kappa_+\wtilde{G},-\wtilde{M}+\kappa_+\wtilde{F}).\label{pdircusp}
\end{equation}

We can extend the notion of a line of curvature as follows. 
The singular locus $\what{\gamma}=f\circ\gamma$ is 
a {\it line of curvature} if the principal vector $\bm{v}$ is tangent to $\gamma$. 
\begin{prop}\label{curvline}
Let $f:\Sig\to\R^3$ be a front, $p$ a non-degenerate singular point and $\gamma$ the singular curve passing through $p$. 
Then the following assertions hold$:$
\begin{enumerate}
\item[{\rm(1)}] Suppose that $p$ is a cuspidal edge. 
Then $\what{\gamma}$ is a line of curvature of $f$ if and only if $\kappa_t$ vanishes identically along $\gamma$.
\item[{\rm(2)}] Suppose that $p$ is of the second kind. 
Then $\what{\gamma}$ can not be a line of curvature.
\end{enumerate}
\end{prop}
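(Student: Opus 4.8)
The plan is to translate the geometric condition ``$\gamma$ is a line of curvature'' into an algebraic one on the second component of the principal vector. In an adapted coordinate system $(U;u,v)$ the singular curve $\gamma$ is the $u$-axis, so its tangent direction is $\partial_u$; hence $\gamma$ is a line of curvature exactly when the principal vector $\bm{v}$ is proportional to $(1,0)$ along $\{v=0\}$, that is, when its second component $\bm{v}_2$ vanishes identically there. Both parts then come down to reading off $\bm{v}_2$ on $\{v=0\}$ from the explicit formulas \eqref{pdircusp} and \eqref{pdir_sec} and comparing with the invariant formulas of Lemma \ref{cuspidalcurv} and the discussion of $\hat{L}$.

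For (1), I would first note that by Theorem \ref{princi1} (and the sign computation in its proof) the bounded principal curvature satisfies $\kappa_+=\tilde{L}/\tilde{E}$ not merely at $p$ but along the whole $u$-axis near $p$, since $\tilde{N}$ keeps the constant sign $\sgn\kappa_c(p)$ there. Setting $v=0$ in \eqref{pdircusp} gives $\bm{v}=(\tilde{N},\,-\tilde{M}+\kappa_+\tilde{F})$ along $\gamma$; the first entry $\tilde{N}$ is nonzero (as $p$ is a cuspidal edge), so $\bm{v}$ is a genuine nonvanishing direction. Thus the line-of-curvature condition $\bm{v}_2\equiv0$ reads $-\tilde{M}+(\tilde{L}/\tilde{E})\tilde{F}\equiv0$, i.e. $\tilde{E}\tilde{M}-\tilde{F}\tilde{L}\equiv0$ along $\gamma$. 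By the formula $\kappa_t=\pm(\tilde{E}\tilde{M}-\tilde{F}\tilde{L})/(\tilde{E}\sqrt{\tilde{E}\tilde{G}-\tilde{F}^2})$ of Lemma \ref{cuspidalcurv}, this is precisely $\kappa_t\equiv0$ along $\gamma$, giving the equivalence.

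For (2), after replacing $\nu$ by $-\nu$ if necessary so that $\mu_c(p)>0$ (this swaps $\kappa_\pm$ and flips the sign of $\mu_c$), the vector \eqref{pdir_sec} is the principal vector relative to the bounded $\kappa_+$. Evaluating its second entry $\hat{L}-\kappa_+(v\hat{E}-\epsilon\hat{F})$ on $\{v=0\}$ yields $\hat{L}+\kappa_+\epsilon\hat{F}$, which at the base point $p$ reduces to $\hat{L}(p)$ because $\epsilon(0)=0$. Since $f$ is a front, $\hat{L}(p)\neq0$ (as recorded after the definition of $\mu_c$), so $\bm{v}_2(p)\neq0$: the principal vector is transverse to $\gamma$ at $p$, and therefore $\gamma$ cannot be a line of curvature.

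The one step requiring care is the claim in (1) that $\kappa_+=\tilde{L}/\tilde{E}$ holds identically along $\gamma$ and not just at $p$; this is what upgrades the pointwise identity $\kappa_+(p)=\kappa_\nu(p)$ of Theorem \ref{princi1} to an identity valid on a whole neighborhood of $p$ in $\gamma$, and it follows from continuity of $\tilde{N}$ together with $\tilde{N}(p)\neq0$, which keep $\tilde{A}\pm\tilde{B}=\tilde{E}(\tilde{N}\pm|\tilde{N}|)$ valid near $p$. Apart from this, both parts are direct substitutions into the stated formulas, so I expect no serious obstacle.
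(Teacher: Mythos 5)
Your proposal is correct and takes essentially the same route as the paper's own proof: both reduce the line-of-curvature condition to the vanishing of the second component $\bm{v}_2$ of the principal vector along the $u$-axis, use $\kappa_+=\tilde{L}/\tilde{E}$ together with \eqref{pdircusp} and Lemma \ref{cuspidalcurv} to get $\bm{v}_2=-(\tilde{E}\tilde{M}-\tilde{F}\tilde{L})/\tilde{E}=\mp\kappa_t\sqrt{\tilde{E}\tilde{G}-\tilde{F}^2}$ in case (1), and read off $\bm{v}_2(p)=\hat{L}(p)\neq0$ from \eqref{pdir_sec} and $\epsilon(0)=0$ in case (2). The extra details you supply --- that $\tilde{N}$ is nonvanishing along the singular curve so $\kappa_+=\tilde{L}/\tilde{E}$ holds on the whole $u$-axis and $\bm{v}_1=\tilde{N}\neq0$ there, and that the normalization $\mu_c(p)>0$ is achieved by replacing $\nu$ with $-\nu$ --- are points the paper asserts implicitly, so they strengthen rather than alter the argument.
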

\begin{proof}
First, we show assertion (1). 
Take an adapted coordinate system $(U;u,v)$ centered at a cuspidal edge $p$ satisfying $\eta\lambda(u,0)>0$. 
Assume that $\kappa_+$ is bounded on $U$. 
Then the principal vector $\bm{v}=(v_1,v_2)$ relative to $\kappa_+$ is given by \eqref{pdircusp}. 
Since $\kappa_+=\wtilde{L}/\wtilde{E}$ on the $u$-axis, 
$v_2$ can be written as 
$$v_2=-\wtilde{M}+\kappa_+\wtilde{F}=-\frac{\wtilde{E}\wtilde{M}-\wtilde{F}\wtilde{L}}{\wtilde{E}}=
-\kappa_t\sqrt{\wtilde{E}\wtilde{G}-\wtilde{F}^2}$$
along the $u$-axis by Lemma \ref{cuspidalcurv}. 
Thus $v_2$ vanishes on the $u$-axis if and only if $\kappa_t$ vanishes along the $u$-axis, 
and we get the conclusion. 

Next, we show (2). 
Take an adapted coordinate system $(U;u,v)$ around $p$ and assume that $\mu_c(p)>0$ holds. 
In this case, $\kappa_+$ is bounded on $U$ and the principal vector 
$\bm{v}=(v_1,v_2)$ of $\kappa_+$ is given as \eqref{pdir_sec}. 
The second component $v_2$ is written as 
$$v_2=\what{L}+\epsilon\kappa_+\what{F}$$
along the $u$-axis. 
Thus we have $v_2=\what{L}\neq0$ at $p$. 
This implies that the $u$-axis can not be the line of curvature. 
\end{proof}

Using the principal curvature $\kappa_+$ and the principal vector $\bm{v}$ relative to $\kappa_+$, 
we define ridge points for $f$. 
Ridge points play important role to study parallel surfaces.
\begin{dfn}\label{ridge}
Under the above settings, 
a point $p$ is called a \textit{ridge point} if $\bm{v}\kappa_+(p)=0$ holds, 
where $\bm{v}\kappa_+$ denotes the directional derivative of $\kappa_+$ with respect to $\bm{v}$. 
Moreover, a point $p$ is called a \textit{k-th order ridge point} if 
$\bm{v}^{(m)}\kappa_+(p)=0\ (1\leq m\leq k)$ and $\bm{v}^{(k+1)}\kappa_+(p)\neq0$ hold, 
where $\bm{v}^{(m)}\kappa_+$ means the $m$-th directional derivative of $\kappa_+$ with respect to $\bm{v}$.
\end{dfn}
Ridge points for regular surfaces were first studied deeply by Porteous \cite{p1}. 
He showed that ridge points correspond to $A_3$ singular points 
of distance squared functions on regular surfaces, 
that is, cuspidal edges of caustics. 
For more details on ridge points, see \cite{bgt,fh1,fh2,ifrt,p1,p2}.

\section{Parallel surfaces of wave fronts}\label{parallel}
For the case of regular surfaces, principal curvatures relate to singularities of parallel surfaces. 
In this section, 
we consider singularities of parallel surfaces of fronts and give criteria 
in terms of principal curvatures and other geometric properties. 
Swallowtails on parallel surfaces of cuspidal edges are studied in \cite{t1}. 
Here we give criteria for other singularities on parallel surfaces of fronts. 

\subsection{Singularities of parallel surfaces of wave fronts}
In this subsection, we shall deal with fronts which have singular points of the second kind
(swallowtails, for example). 
Needless to say, the following arguments can be applied to the case of cuspidal edges.  

Let $f:\Sig\to\R^3$ be a front, $\nu$ a unit normal to $f$ and 
$p\in \Sig$ a non-degenerate singular point of the second kind. 
Then the {\it paralle surface $f^t$ of $f$} is defined by 
$f^t=f+t\nu$, where $t\in \R\setminus\{0\}$ is constant. 
We note that $f^t$ is also a front since $\nu$ is a unit normal to $f^t$. 
\begin{lem}\label{singsetpara}
Let $f:\Sig\to\R^3$ be a front, $\nu$ its unit normal vector and $p$ a non-degenerate singular point of $f$. 
Suppose that $\kappa_+$ is a bounded $C^\infty$-function near $p$ and $\kappa_+(p)\neq0$. 
Then $p$ is a singular point of $f^t$ if and only if $t=1/\kappa_+(p)$. 
Moreover, $p$ is non-degenerate singular point of $f^t$ if and only if $p$ is not a critical point of $\kappa_+$.
\end{lem}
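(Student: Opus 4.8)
The plan is to compute the signed area density function $\lambda^t=\det(f^t_u,f^t_v,\nu)$ of the parallel surface $f^t=f+t\nu$ and to show that it factors, globally on $U$, as $\lambda^t=(1-t\kappa_+)\Psi$ for a single $C^\infty$-function $\Psi$ that does not vanish at $p$. Once this factorization is in hand, both assertions follow at once: since $\Psi(p)\neq0$ and $\kappa_+(p)\neq0$, the equation $\lambda^t(p)=0$ is equivalent to $1-t\kappa_+(p)=0$, i.e. $t=1/\kappa_+(p)$; and differentiating at such a $t$, where $1-t\kappa_+(p)=0$, gives $d\lambda^t(p)=-t\,\Psi(p)\,d\kappa_+(p)$, which is nonzero precisely when $d\kappa_+(p)\neq0$.

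First I would work on the set of regular points of $f$, where $\{f_u,f_v\}$ spans the tangent plane. Writing $\nu_u,\nu_v$ in this frame via the Weingarten-type formulas (Lemma \ref{wein1} or Lemma \ref{weinsec}), the vectors $f^t_u=f_u+t\nu_u$ and $f^t_v=f_v+t\nu_v$ are expressed through the matrix $I+tP$, where $P$ is the matrix of $d\nu$ with respect to $\{f_u,f_v\}$. A short determinant computation then yields $\lambda^t=\det(I+tP)\,\lambda=(1-t\kappa_+)(1-t\kappa_-)\lambda$ on the regular set, using that the eigenvalues of $-P$ are $\kappa_\pm$ together with $\kappa_\pm=H\mp\sqrt{H^2-K}$.

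The crux is to reinterpret the factor $(1-t\kappa_-)\lambda$, which a priori involves the unbounded principal curvature $\kappa_-$, as a genuine $C^\infty$-function $\Psi$ on all of $U$. I would write $\kappa_-=2H-\kappa_+$, so that $\Psi:=(1-t\kappa_-)\lambda=\lambda-2tH\lambda+t\kappa_+\lambda$. Since $p$ is non-degenerate and the coordinate is adapted, the $u$-axis is exactly $\{\lambda=0\}$ with $\lambda_v(p)\neq0$, so Hadamard's lemma gives $\lambda=vg$ with $g$ smooth and $g(p)=\lambda_v(p)\neq0$; because $vH$ is a $C^\infty$-function (this is $\hat{H}$ in the second-kind setting), the product $H\lambda=(vH)g$ is smooth, and $\kappa_+\lambda$ is smooth as $\kappa_+$ is a bounded $C^\infty$-function. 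Hence $\Psi$ is $C^\infty$, and evaluating on the $u$-axis, where $\lambda=0$ and $\kappa_+\lambda=0$, gives $\Psi(p)=-2t(vH)(p)g(p)$. Here $(vH)(p)\neq0$ because $f$ is a front --- this is $\tfrac12\mu_c(p)\neq0$ at a singular point of the second kind, and is governed by $\kappa_c(p)\neq0$ at a cuspidal edge --- so $\Psi(p)\neq0$ for $t\neq0$.

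Finally, since $\lambda^t$ is smooth everywhere and agrees with the smooth function $(1-t\kappa_+)\Psi$ on the dense set of regular points of $f$, the identity $\lambda^t=(1-t\kappa_+)\Psi$ holds on all of $U$ by continuity, completing the argument as sketched above. I expect the main obstacle to be exactly this passage from the regular locus to the singular point: the expression $(1-t\kappa_-)\lambda$ is a product of an unbounded factor and a vanishing factor, and one must organize the cancellation --- via $\kappa_-=2H-\kappa_+$, the smoothness of $vH$, and the factorization $\lambda=vg$ --- to see that the limit is not only finite but nonzero, which is what makes the front hypothesis $(vH)(p)\neq0$ indispensable.
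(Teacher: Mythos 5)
Your proof is correct and takes essentially the same approach as the paper: both establish the factorization $\lambda^t=(1-t\kappa_+)\,\lambda(1-t\kappa_-)$, observe that the smooth function $\lambda\kappa_-$ does not vanish at $p$, and then reduce the non-degeneracy claim to $d\kappa_+(p)\neq0$ by treating $\kappa_+-\kappa_+(p)$ as an equivalent signed area density. Your additional care in justifying the smoothness and nonvanishing of $\Psi=\lambda(1-t\kappa_-)$ --- via $\kappa_-=2H-\kappa_+$, the smoothness of $vH=\hat{H}$, Hadamard's factorization $\lambda=vg$, and the front condition $\mu_c(p)\neq0$ --- merely makes explicit what the paper asserts without detail when it says ``$\lambda\kappa_-$ does not vanish at $p$.''
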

\begin{proof}
We show the case that $p$ is of the second kind. 
Let $(U;u,v)$ be an adapted coordinate system centered at $p$ with 
the null vector field $\eta=\partial_u+\epsilon(u)\partial_v$. 
Then the signed area density function $\lambda^t=\det(f^t_u,f^t_v,\nu)$ of $f^t$ can be written as 
\begin{equation*}
\lambda^t=\det(f^t_u,f^t_v,\nu)=(1-t\kappa_+)(\lambda-t\lambda\kappa_-)
\end{equation*}
by Lemma \ref{weinsec}, where $\lambda=\det(f_u,f_v,\nu)$. 
Since $\lambda\kappa_-$ does not vanish at $p$, 
$p$ is a singular point of $f^t$ if and only if $t=1/\kappa_+(p)$ holds. 
Thus we may treat $\what{\lambda}^t=\kappa_+(u,v)-\kappa_+(p)$ as the signed area density function of $f^t$. 
Non-degeneracy follows from $d\what{\lambda}^t=(\kappa_+)_udu+(\kappa_+)_vdv$. 
\end{proof}
\begin{thm}\label{sing_para}
Let $f:\Sig\to\bm{R}^3$ be a front and $p$ be a non-degenerate singular point. 
Suppose that the principal curvature $\kappa_+$ is a bounded $C^\infty$-function near $p$ and $\kappa_+(p)\neq0$. 
Then for the parallel surface $f^t$ with $t=1/\kappa_+(p)$, the following conditions hold.
\begin{enumerate}
\item[{\rm (1)}] 
Assume $d\kappa_+(p)\neq0$. 
Then the following hold$:$
\begin{itemize}
\item The map-germ $f^t$ at $p$ is $\mathcal{A}$-equivalent to a cuspidal edge 
if and only if $p$ is not a ridge point of $f$.
\item The map-germ $f^t$ at $p$ is $\mathcal{A}$-equivalent to a swallowtail if and only if 
$p$ is a first order ridge point of $f$. 
\item The map-germ $f^t$ at $p$ is $\mathcal{A}$-equivalent to a cuspidal butterfly if and only if 
$p$ is a second order ridge point of $f$.
\end{itemize}
\item[{\rm(2)}] 
Assume $d\kappa_+(p)=0$. 
Then the following hold$:$
\begin{itemize}
\item The map-germ $f^t$ at $p$ is $\mathcal{A}$-equivalent to a cuspidal lips if and only if 
$\rank\left(df^t\right)_p=1$ and $\det\mathcal{H}_{\kappa_+}(p)>0$ hold. 
\item The map-germ $f^t$ at $p$ is $\mathcal{A}$-equivalent to a cuspidal beaks if and only if 
$p$ is a first order ridge point of $f$, $\rank\left(df^t\right)_p=1$ 
and $\det\mathcal{H}_{\kappa_+}(p)<0$ hold.
\end{itemize}
Here $\mathcal{H}_{\kappa_+}(p)$ is the Hessian matrix of $\kappa_+$ at $p$. 
\end{enumerate}
\end{thm}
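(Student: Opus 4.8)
The plan is to read off the $\mathcal{A}$-type of $f^t$ at $p$ from the criteria of Fact \ref{crit_front}, after matching the null vector field and the signed area density of $f^t$ with data already carried by $f$. By Lemma \ref{singsetpara} the singular set of $f^t$ near $p$ is the level set $\{\kappa_+=\kappa_+(p)\}$, and $\hat\lambda^t=\kappa_+-\kappa_+(p)$ serves as a signed area density of $f^t$; more precisely, using the factorization $\lambda^t=(1-t\kappa_+)(\lambda-t\lambda\kappa_-)$ from the proof of Lemma \ref{singsetpara} one writes $\lambda^t=\theta\,\hat\lambda^t$ with $\theta=-\kappa_+(p)^{-1}(\lambda-t\lambda\kappa_-)$ nonvanishing at $p$ (since $\lambda\kappa_-$ does not vanish there).

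The key geometric input is that the null vector field of $f^t$ along this set is the principal vector $\bm{v}$ relative to $\kappa_+$. On regular points the eigenvector equation $(II-\kappa_+I)\bm{v}=\bm{0}$ forces $d\nu(\bm{v})+\kappa_+\,df(\bm{v})$ to be perpendicular to $f_u,f_v$ and to $\nu$, hence to vanish, so $d\nu(\bm{v})=-\kappa_+\,df(\bm{v})$; consequently $df^t(\bm{v})=df(\bm{v})+t\,d\nu(\bm{v})=(1-t\kappa_+)\,df(\bm{v})$, an identity which extends by continuity and, with $t=1/\kappa_+(p)$, vanishes exactly on $\{\kappa_+=\kappa_+(p)\}$. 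Since $\bm{v}(p)\neq\bm{0}$ and $df(\bm{v})(p)\neq\bm{0}$ by the explicit forms \eqref{pdir_sec} and \eqref{pdircusp} (their nonzero components being $\hat{L}(p)$, resp. $\tilde{N}(p)$), the vector $\bm{v}$ is a null vector field of $f^t$, proportional to the canonical one by a nonvanishing factor. A short computation with Lemma \ref{weinsec} (resp. Lemma \ref{wein1}) moreover gives $f^t_v(p)\parallel f^t_u(p)=t\nu_u(p)\neq\bm{0}$, so in the non-degenerate case $\rank(df^t)_p=1$ is automatic.

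For part (1), assume $d\kappa_+(p)\neq0$, so $p$ is a non-degenerate corank one singular point of $f^t$. Because $\hat\lambda^t$ and $\kappa_+$ differ by a constant, $\bm{v}^{(m)}\hat\lambda^t=\bm{v}^{(m)}\kappa_+$ for all $m$, and rescaling the null field by a nonvanishing factor together with replacing $\lambda^t$ by $\hat\lambda^t$ preserves the vanishing pattern of the iterated null-derivatives once the lower ones vanish. Thus Fact \ref{crit_front}(1) reads: $f^t$ is a cuspidal edge iff $\bm{v}\kappa_+(p)\neq0$, i.e. $p$ is not a ridge point; a swallowtail iff $\bm{v}\kappa_+(p)=0\neq\bm{v}^{(2)}\kappa_+(p)$, i.e. $p$ is a first order ridge point; and a cuspidal butterfly iff $\bm{v}\kappa_+(p)=\bm{v}^{(2)}\kappa_+(p)=0\neq\bm{v}^{(3)}\kappa_+(p)$, i.e. $p$ is a second order ridge point, all by Definition \ref{ridge}.

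For part (2), $d\kappa_+(p)=0$ makes $p$ degenerate, and the hypothesis $\rank(df^t)_p=1$ restores corank one, so Fact \ref{crit_front}(2) applies. The central computation is the Hessian comparison: since $d\hat\lambda^t(p)=d\kappa_+(p)=\bm{0}$, the Leibniz rule annihilates the mixed terms in $\mathcal{H}_{\lambda^t}(p)=\mathcal{H}_{\theta\hat\lambda^t}(p)$, leaving $\mathcal{H}_{\lambda^t}(p)=\theta(p)\,\mathcal{H}_{\kappa_+}(p)$ and hence $\sgn\det\mathcal{H}_{\lambda^t}(p)=\sgn\det\mathcal{H}_{\kappa_+}(p)$; likewise the beaks condition $\eta^t\eta^t\lambda^t(p)=\theta(p)\,\bm{v}^{(2)}\kappa_+(p)\neq0$ is, given that $\bm{v}\kappa_+(p)=0$ holds automatically, exactly that $p$ be a first order ridge point. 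Reading off Fact \ref{crit_front}(2) then yields the cuspidal lips and cuspidal beaks characterizations. The step I expect to be most delicate is the bookkeeping underlying the middle paragraphs, namely verifying that substituting $\bm{v}$ for the canonical null field of $f^t$ and $\hat\lambda^t$ for $\lambda^t$ leaves the vanishing/non-vanishing of the second- and third-order null-derivatives unchanged, together with the continuity argument establishing $d\nu(\bm{v})=-\kappa_+\,df(\bm{v})$ at the singular point where the shape operator is undefined.
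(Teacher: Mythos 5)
Your proposal is correct and follows essentially the same route as the paper: take $\hat{\lambda}^t=\kappa_+-\kappa_+(p)$ as the signed area density of $f^t$ (Lemma \ref{singsetpara}), identify the null vector field of $f^t$ along $S(f^t)$ with the principal vector $\bm{v}$, and translate Fact \ref{crit_front} into the ridge and Hessian conditions via $(\eta^t)^{(k)}\hat{\lambda}^t=\bm{v}^{(k)}\kappa_+$ and $\mathcal{H}_{\hat{\lambda}^t}(p)=\mathcal{H}_{\kappa_+}(p)$. Your coordinate-free derivation $df^t(\bm{v})=(1-t\kappa_+)\,df(\bm{v})$ and the explicit bookkeeping of the nonvanishing factor $\theta$ are only mild repackagings of the paper's matrix computations with Lemma \ref{weinsec} (the paper obtains $\rank\left(df^t\right)_p=1$ from the Jacobian factorization $J_{f^t}=(\phi,f_v)\mathcal{M}$ in \eqref{calm}), so the two arguments coincide in substance.
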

\begin{proof}
Let $f:\Sig\to\R^3$ be a front, $p\in \Sig$ a non-degenerate singular point of the second kind
 and $\nu$ a unit normal vector. 
Then we take an adapted coordinate system $(U;u,v)$ around $p$. 
By Lemma \ref{singsetpara}, 
we can take the signed area density function of the parallel surface $f^t$ with $t=1/\kappa_+(p)$ as 
$\what{\lambda}^t(u,v)=\kappa_+(u,v)-\kappa_+(p)$. 

First, we prove the assertion (1). 
In this case, $(\what{\lambda}^t)^{-1}(0)$ is a smooth curve near $p$ 
and there exists a null vector field $\eta^t$ of $f^t$. 
We set $\eta^t=\eta^t_1\partial_u+\eta^t_2\partial_v$, where $\eta^t_i\ (i=1,2)$ are functions on $U$. 
By Lemma \ref{weinsec}, $df^t(\eta^t)$ is written as 
\begin{multline*}
df^t(\eta^t)=\left[\left(v+t\frac{\what{F}(v\what{M}-\epsilon\what{N})}{\what{E}\what{G}-\what{F}^2}\right)\eta_1^t
+t\frac{\what{F}\what{N}-\what{G}\what{M}}{\what{E}\what{G}-\what{F}^2}\eta_2^t\right]\phi\\
+\left[\left(-\epsilon+
t\frac{\what{F}\what{L}-\what{E}(v\what{M}-\epsilon\what{N})}{\what{E}\what{G}-\what{F}^2}\right)\eta_1^t
+\left(1+t\frac{\what{F}\what{M}-\what{E}\what{N}}{\what{E}\what{G}-\what{F}^2}\right)\eta_2^t\right]f_v.
\end{multline*} 
Since $\phi$ and $f_v$ are linearly independent, $df^t(\eta^t)=\bm{0}$ on $S(f^t)$ is equivalent to 
\begin{equation*}
\begin{pmatrix}
\what{L}-\kappa_+(v\what{E}-\epsilon\what{F}) & \what{M}-\kappa_+\what{F}\\
v(\what{M}-\kappa_+\what{F})-\epsilon(\what{N}-\kappa_+\what{G}) & \what{N}-\kappa_+\what{G}
\end{pmatrix}
\begin{pmatrix}
\eta^t_1\\ \eta^t_2
\end{pmatrix}
=
\begin{pmatrix}
0\\0
\end{pmatrix}
\end{equation*}
holds on $S(f^{t})$. 
Thus the null vector field $\eta^t$ can be taken as the principal vector $\bm{v}$ 
as in \eqref{pdir_sec} with respect to $\kappa_+$ restricted to $S(f^{t})$. 
Under these conditions, 
the equation $(\eta^t)^{(k)}\what{\lambda}^{t}=\bm{v}^{(k)}\kappa_+$ holds for some natural number $k$. 
Thus we have the assertion (1) by Fact \ref{crit_front} (1). 

Next, we prove (2). 
In this case, $d\kappa_+$ vanishes at $p$. 
We consider the rank of $df^t$ at $p$. 
The Jacobian matrix $J_{f^t}$ of $f^t$ is $J_{f^t}=(\phi,f_v)\mathcal{M}$ at $p$, where 
\begin{equation}
\mathcal{M}=
\begin{pmatrix}
0 & 0\\
0 & 1
\end{pmatrix}-t
\begin{pmatrix}
\what{E} & \what{F}\\
\what{F} & \what{G}
\end{pmatrix}^{-1}
\begin{pmatrix}
\what{L} & \what{M}\\
0 & \what{N}
\end{pmatrix}=
\frac{1}{\what{N}(\what{E}\what{G}-\what{F}^2)}
\begin{pmatrix}
{-\what{G}^2\what{L}} & \what{G}(\what{F}\what{N}-\what{G}\what{M})\\
\what{F}\what{G}\what{L} & -\what{F}(\what{F}\what{N}-\what{G}\what{M})
\end{pmatrix}
.\label{calm}
\end{equation} 
Since $\rank\mathcal{M}=1$, 
it follows that 
$\rank\left(J_{f^t}\right)_p=1$, when $t=1/\kappa_+(p)$, 
and it implies that $\rank\left(df^t\right)_p=1$. 
Thus there exists a non-zero vector field $\eta^t$ near $p$ 
such that if $q\in S(f^t)$ then $df^t(\eta^t)=\bm{0}$ holds at $q$. 
We can take the principal vector $\bm{v}$ with respect to $\kappa_+$ as $\eta^t$, 
then $\eta^t\eta^t\what{\lambda}^{t}=\bm{v}^{(2)}\kappa_+$. 
Moreover, we see that 
$\what{\lambda}^{t}_{uu}=(\kappa_+)_{uu}$, $\what{\lambda}^{t}_{uv}=(\kappa_+)_{uv}$, $\what{\lambda}^{t}_{vv}=(\kappa_+)_{vv}$. 
Thus we have 
$\det\mathcal{H}_{\what{\lambda}^{t}}(p)=\det\mathcal{H}_{\kappa_+}(p)$.  
By using Fact \ref{crit_front} (2) 
and the definition of ridge points, we have the conclusion. 
\end{proof}
This theorem implies that the behavior of a bounded principal curvature of fronts 
determines the types of singularities 
appearing on parallel surfaces. 
For regular surfaces and Whitney umbrellas, similar results are obtained in \cite{fh1,fh2}. 
By \eqref{calm} in the proof of Theorem \ref{sing_para} and \cite[Theorem 1.1]{s1}, we see that 
a parallel surface $f^t$ does not have $D_4$ singularity at $p$.

\subsection{Constant principal curvature lines and exactly cusped points of cuspidal edges}\label{cpc}
Let $f:\Sig\to\R^3$ be a front, $\nu$ a unit normal vector and $p$ a cuspidal edge. 
Suppose that $\kappa_+$ is bounded at $p$ and $\kappa_+(p)\neq0$. 
We set $\what{\lambda}^t(u,v)=\kappa_+(u,v)-\kappa_+(p)$. 
The zero-set of this function gives the singular curve of the parallel surface $f^t$ of $f$, 
where $t=1/\kappa_+(p)$ (Lemma \ref{singsetpara}). 
We call the curve given by $\what{\lambda}^t(u,v)=\kappa_+(u,v)-\kappa_+(p)=0$ a 
\textit{constant principal curvature $($CPC\/$)$ line with the value of $\kappa_+(p)$} (cf. \cite{fh1,fh2}). 
In this case, the CPC line is a regular curve since $d\what{\lambda}^t(p)\neq0$. 
In \cite{fh1,fh2}, 
CPC lines for regular surfaces and Whitney umbrellas, 
and relations between singularities of parallel surfaces and the behavior of CPC lines are investigated. 
For intrinsic properties of Whitney umbrellas, see \cite{hhnsuy,hhnuy}. 

First, we consider contact of the CPC line with the singular curve. 
\begin{dfn}
Let $\alpha:I\ni t\mapsto(x(t),y(t))\in\R^2$ be a regular plane curve and let $\beta$ be another plane curve 
given as the zero set of a $C^\infty$-function $F:\R^2\to\R$, where $I\subset\R$ is an open interval. 
Then $\alpha$ has {\it $(n+1)$-point contact} at $t_0\in I$ with $\beta$ if 
the function $g(t)=F\circ\alpha(t)=F(x(t),y(t))$ satisfies 
$$g(t_0)=g'(t_0)=g''(t_0)=\cdots=g^{(n)}(t_0)=0\quad\textrm{and}\quad g^{(n+1)}(t_0)\neq0,$$
where $'=d/dt$ and $g^{(m)}$ denotes the $m$-th order derivative of $g$.
\end{dfn}

\begin{prop}\label{contactness}
Let $f:\Sig\to\R^3$ be a front, $p$ a cuspidal edge 
and $\gamma$ a singular curve passing through $p$. 
Suppose that $\kappa_+$ is bounded near $p$ and $d\kappa_+(p)\neq0$. 
Then $\gamma$ has $(n+1)$-point contact at $p$ with the CPC line if and only if 
$$\kappa_\nu'(p)=\cdots=\kappa_\nu^{(n)}(p)=0\quad\text{and}\quad \kappa_\nu^{(n+1)}(p)\neq0.$$
\end{prop}
\begin{proof}
Let $(U;u,v)$ be an adapted coordinate system. 
Then $\kappa_+(u,0)=\kappa_\nu(u)$ holds by Theorem \ref{princi1}.
Thus the composite function of $\what{\lambda}^t$ and $\gamma$ is given as 
$$\what{\lambda}^t(u,0)=\kappa_\nu(u)-\kappa_\nu(p)$$
since $\kappa_+(p)=\kappa_\nu(p)$. 
Hence we get the conclusion by the definition of contact of two plane curves.
\end{proof}

Next, we consider special points (landmarks in the sense of Porteous \cite{p2}) on CPC lines of cuspidal edges. 
In this case, we use the following normal form obtained by Martins and Saji \cite{ms}.
\begin{fact}[{\cite[Theorem 3.1]{ms}}]\label{normal_cusp}
Let $f:(\bm{R}^2,\bm{0})\to (\bm{R}^3,\bm{0})$ be a map-germ and $\bm{0}$ a cuspidal edge. 
Then there exist a diffeomorphism-germ $\theta:(\bm{R}^2,\bm{0})\to (\bm{R}^2,\bm{0})$ 
and an isometry-germ $\Theta:(\bm{R}^3,\bm{0})\to (\bm{R}^3,\bm{0})$ satisfying 
\begin{equation}\label{normal}
\Theta\circ f \circ\theta(u,v)=
\left( u,\frac{a_{20}}{2}u^2+\frac{a_{30}}{6}u^3+\frac{v^2}{2},
\frac{b_{20}}{2}u^2+\frac{b_{30}}{6}u^3+\frac{b_{12}}{2}uv^2+\frac{b_{03}}{6}v^3\right)+h(u,v),
\end{equation}
where $b_{20}\geq0,\,b_{03}\neq0$ and 
\begin{equation*}
h(u,v)=(0,u^4h_1(u),u^4h_2(u)+u^2v^2h_3(u)+uv^3h_4(u)+v^4h_5(u,v)),
\end{equation*}
with $h_i(u)\ (1\leq i \leq 4),\ h_5(u,v)$ smooth functions.
\end{fact}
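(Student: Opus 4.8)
My plan is to spend the allowed freedom in two passes: first fix a limiting frame by combining one source diffeomorphism with one target isometry, then normalize the two non-trivial components, using a parametrized Morse lemma for the middle one and reading off the last one, whose shape turns out to be forced by the adapted structure. Concretely, I would first put $f$ into an adapted coordinate system in the sense of Definition~\ref{adapt_1}, so that the singular set is the $u$-axis and $f_v=v\phi$ with $\phi(\bm{0})\neq\bm{0}$; recall then that $f_{vv}(u,0)=\phi(u,0)$ and, more generally, $\partial_u^k f_v(u,0)=\bm{0}$ for every $k\geq0$. Using a translation I set $f(\bm{0})=\bm{0}$, and using a rotation (an isometry) I align the limiting frame $\{f_u(\bm{0}),\phi(\bm{0}),\nu(\bm{0})\}$ with the axes: $f_u(\bm{0})\in\R_{>0}\,e_1$, $\phi(\bm{0})$ in the $e_1e_2$-plane with positive $e_2$-component, and $\nu(\bm{0})=e_3$. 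Finally, since the first component satisfies $f^{(1)}_u(\bm{0})\neq0$ while $f^{(1)}_v(u,0)=0$, the map $(u,v)\mapsto(f^{(1)}(u,v),v)$ is a diffeomorphism fixing $\{v=0\}$; applying its inverse makes the first component exactly $u$ and, since it fixes $\{v=0\}$ with $\partial_v$ still null, preserves the adapted structure. A reflection $e_3\mapsto-e_3$ is kept in reserve to arrange $b_{20}\geq0$ at the end.

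The remaining source freedom compatible with ``first component $=u$'' and with adaptedness is exactly $(u,v)\mapsto(u,\beta(u,v))$ with $\beta(u,0)=0$. I would use it to normalize the second component $P:=f^{(2)}$. By adaptedness $P_v(u,0)=0$, and $P_{vv}(u,0)$ equals the $e_2$-component of $\phi(u,0)$, which is positive near $\bm{0}$ by the frame choice, so the critical point of $P(u,\cdot)$ sits at $v=0$ and is nondegenerate. The parametrized Morse (splitting) lemma then provides $\beta$ with $\beta(u,0)=0$ such that $P(u,\beta(u,v))=\tfrac{v^2}{2}+g(u)$, where $g(u)=P(u,0)$. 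Since $f_u(\bm{0})=e_1$ forces $g(0)=g'(0)=0$, expanding $g$ produces the second slot of \eqref{normal}, namely $\tfrac{a_{20}}{2}u^2+\tfrac{a_{30}}{6}u^3+u^4h_1(u)$, with no mixed or higher $v$-terms at all.

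It then remains only to expand the third component $Q:=f^{(3)}$ in the coordinates already fixed and to define the $b_{ij}$ as its Taylor coefficients. The claimed shape is forced: the frame normalization kills the constant, the $e_1$-linear and the $v$-linear terms, and, via $\nu(\bm{0})=e_3$ together with $\phi(\bm{0})$ lying in the $e_1e_2$-plane, also the $v^2$-coefficient; adaptedness ($\partial_u^k f_v(u,0)=\bm{0}$) kills \emph{every} monomial $u^kv$ carrying a single power of $v$. Hence up to order three only $u^2,u^3,uv^2,v^3$ survive, giving $b_{20},b_{30},b_{12},b_{03}$, and the order-$\geq4$ remainder consists precisely of the pure $u$-powers, the $u^kv^2$ with $k\geq2$, the $u^kv^3$ with $k\geq1$, and the $v^{\geq4}$ terms, i.e. $u^4h_2(u)+u^2v^2h_3(u)+uv^3h_4(u)+v^4h_5(u,v)$. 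That $b_{03}\neq0$ follows because $b_{03}=f^{(3)}_{vvv}(\bm{0})$ is, up to the nonzero factor appearing in \eqref{cusp2}, the cuspidal curvature $\kappa_c(\bm{0})$, which is nonzero at a cuspidal edge; the reserved reflection then secures $b_{20}\geq0$.

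The main point to watch is that the two normalizations remain compatible and that nothing survives which could be further removed. Compatibility is what legitimizes the Morse-lemma step: its change of coordinates has the form $(u,v)\mapsto(u,\beta(u,v))$, so it leaves the first component equal to $u$ and, with $\beta(u,0)=0$, keeps the singular set and the null direction intact, and one must verify these simultaneously. The more conceptual obstacle is invariance: one must check that the full six-dimensional target isometry group has already been exhausted by the frame alignment, so that the surviving coefficients $a_{20},a_{30},b_{20},b_{30},b_{12},b_{03}$ admit no further reduction and are therefore genuine isometric invariants. It is precisely this rigidity of the isometry group, as opposed to the full diffeomorphism group on the target, that both creates the invariants and makes the bookkeeping of the remainder $h$ delicate.
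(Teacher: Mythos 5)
This statement is quoted in the paper as a known result (Fact~\ref{normal_cusp}, citing \cite[Theorem 3.1]{ms}) with no proof given, and your argument is essentially the proof from that source: fix the frame $\{f_u,\phi,\nu\}$ at the origin by a target isometry, straighten the first component by a source diffeomorphism preserving the adapted structure, normalize the second component by the parametrized Morse lemma, and read off the shape of the third component from $f_v(u,0)\equiv\bm{0}$ together with the order-$\leq 3$ Taylor/Hadamard expansion in $v$, with $b_{03}=\kappa_c(\bm{0})\neq0$ at a cuspidal edge. Your closing worry about exhausting the isometry group is superfluous for the statement as quoted, which asserts only the existence of the normal form, not uniqueness or invariance of the coefficients.
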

We note that coefficients in the normal form \eqref{normal} correspond to 
$\kappa_s(0)=a_{20}$, $\kappa_\nu(0)=b_{20}$, $\kappa_t(0)=b_{12}$ and $\kappa_c(0)=b_{03}$ (see \cite{ms}).

Let $f:\Sig\to\R^3$ be a front, $p\in \Sig$ a cuspidal edge and assume that $\kappa_+$ is bounded near $p$. 
The condition $\eta\kappa_+=0$ at $p$ implies that the CPC line is tangent to the null vector field $\eta$ of $f$ at $p$. 
Moreover, the image $f(S(f^t))$ of the set of singular points of the parallel surface $f^t$ by $f$ is cusped at $p$. 
We call such a point an \textit{exactly cusped point for the constant principal curvature $($CPC\/$)$ line}. 
\begin{prop}\label{negative}
Let $f:\Sig\to\R^3$ be a front and $p$ a cuspidal edge. 
Suppose that $\kappa_+$ $($resp. $\kappa_-)$ is bounded at $p$. 
Then 
$\eta\kappa_+(p)=0$ $($resp. $\eta\kappa_-(p)=0)$ 
implies $\kappa_s(p)\leq0$.
\end{prop}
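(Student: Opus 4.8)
The plan is to reduce to the Martins--Saji normal form of Fact~\ref{normal_cusp} and compute $\eta\kappa_+(p)$ explicitly. Since $\kappa_s,\kappa_t,\kappa_c$ and the vanishing of $\eta\kappa_+$ at $p$ are all independent of the choice of adapted coordinates, I may assume $f$ is given by \eqref{normal}, in which case $\kappa_s(\bm{0})=a_{20}$, $\kappa_\nu(\bm{0})=b_{20}$, $\kappa_t(\bm{0})=b_{12}$ and $\kappa_c(\bm{0})=b_{03}$, where $b_{03}>0$ by the boundedness of $\kappa_+$ together with Theorem~\ref{princi1}. Because the coordinates are adapted, $\eta=\partial_v$ on the $u$-axis, so $\eta\kappa_+(\bm{0})=(\kappa_+)_v(\bm{0})$, and the whole problem reduces to extracting the coefficient of $v$ in the Taylor expansion of $\kappa_+(0,v)$ in $v$.

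Next I would compute $\kappa_+$ to first order in $v$. The difficulty is that $\kappa_+=H-\sqrt{H^2-K}$ with both $H$ and $K$ blowing up like $1/v$ along the singular curve, so a naive expansion fails. Writing instead $\kappa_+=\dfrac{K/H}{1+\sqrt{1-K/H^2}}$ and using that $K/H^2=O(v)$ (because $K\sim c/v$ while $H^2\sim c'/v^2$), I obtain $\kappa_+=\tfrac{K}{2H}\bigl(1+\tfrac14 K/H^2\bigr)+O(v^2)$, which is accurate to first order. Feeding in $H=\tilde{A}/(2vD)$ and $K=(\tilde{L}\tilde{N}-v\tilde{M}^2)/(vD)$ with $D=\tilde{E}\tilde{G}-\tilde{F}^2$ from Lemma~\ref{wein1}, I reduce everything to the $1$-jets in $v$ of $\tilde{E},\tilde{F},\tilde{G},\tilde{L},\tilde{M},\tilde{N}$ along $u=0$. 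These follow by differentiating \eqref{normal}; the essential ones are $\tilde{L}=b_{20}-\tfrac{a_{20}b_{03}}{2}v+\cdots$, $\tilde{M}=b_{12}+\cdots$, $\tilde{N}=\tfrac{b_{03}}{2}+\cdots$, together with $\tilde{E}=1+O(v^4)$, $\tilde{F}=O(v^3)$ and $\tilde{G}=1+O(v^2)$.

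Assembling these and collecting the coefficient of $v$ should yield the clean formula
\[
\eta\kappa_+(p)=-\frac{4\kappa_t(p)^2+\kappa_s(p)\,\kappa_c(p)^2}{2\,\kappa_c(p)}.
\]
Since $\kappa_c(p)\neq0$, the hypothesis $\eta\kappa_+(p)=0$ is then equivalent to $\kappa_s(p)\,\kappa_c(p)^2=-4\kappa_t(p)^2$, that is $\kappa_s(p)=-4\kappa_t(p)^2/\kappa_c(p)^2\le0$, which is the assertion (with equality exactly when $\kappa_t(p)=0$). The case where $\kappa_-$ is bounded reduces to this one by replacing $\nu$ with $-\nu$, under which $\kappa_+$ and $-\kappa_-$ are interchanged while $\kappa_s$ is unchanged.

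The main obstacle is the first-order expansion of the square root across the singular set: one must combine the $O(v)$ part of the leading term $K/(2H)$ with the leading part of the next square-root correction $\tfrac18 K^2/H^3$, since both contribute at order $v$. A secondary bookkeeping point is to check that the unknown coefficients $h_i$ of the flat remainder $h$ in \eqref{normal} (in particular $h_5(0,0)$, which enters both $\tilde{N}$ and $\tilde{A}$) cancel out of the final coefficient; verifying this cancellation is a good consistency test and is precisely what makes the answer depend only on $a_{20},b_{12},b_{03}$, i.e. only on $\kappa_s,\kappa_t,\kappa_c$.
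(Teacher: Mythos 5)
Your proposal is correct and takes essentially the same route as the paper: both reduce to the Martins--Saji normal form \eqref{normal}, where $\eta=\partial_v$, and compute $\eta\kappa_+(\bm{0})=(\kappa_+)_v(\bm{0})=-(4b_{12}^2+a_{20}b_{03}^2)/(2b_{03})$ --- exactly the paper's \eqref{diffk+}, which it states without derivation --- whence $\eta\kappa_+(\bm{0})=0$ forces $\kappa_s(\bm{0})=a_{20}=-4b_{12}^2/b_{03}^2\leq0$. Your square-root expansion of $\kappa_+$ (including the cancellation of $h_5(0,0)$) is a valid way to produce that one-jet, and your invariant form $\eta\kappa_+(p)=-\bigl(4\kappa_t(p)^2+\kappa_s(p)\kappa_c(p)^2\bigr)/\bigl(2\kappa_c(p)\bigr)$ is just the same identity rewritten via $\kappa_s(\bm{0})=a_{20}$, $\kappa_t(\bm{0})=b_{12}$, $\kappa_c(\bm{0})=b_{03}$.
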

\begin{proof}
By using a normal form as in \eqref{normal}, we have 
\begin{equation}
(\kappa_+)_u=b_{30}-a_{20}b_{12},\quad 
 (\kappa_+)_v=-(4b_{12}^2+a_{20}b_{03}^2)/2b_{03}\label{diffk+}
\end{equation}
at $\bm{0}$ (see \cite[Lemma 2.2]{t1}). 
Since the null vector is $\eta=\partial_v$ for a normal form \eqref{normal}, 
the relation $(\kappa_+)_v=\eta\kappa_+$ holds. 
Hence $\eta\kappa_+(\bm{0})=0$ if and only if 
$4b_{12}^2+a_{20}b_{03}^2=0$. 
This implies that 
$$\kappa_s(0)=a_{20}=-\frac{4b_{12}^2}{b_{03}^2}\leq0.$$
Thus we obtain the assertion.
For the case of $\kappa_-$ to be bounded, we can show in a similar way.
\end{proof}

Relations between the Gaussian curvature and 
the singular curvature are stated in \cite[Theorem 3.1]{suy}. 

\begin{prop}\label{exactcusp}
Let $f:\Sig\to\R^3$ be a front, $p$ a cuspidal edge, $\gamma$ a singular curve and $\eta$ a null vector field. 
Assume that $\kappa_+$ is bounded near $p$, $\kappa_+(p)\neq0$ 
and $p$ is not a ridge point of $f$. 
Then the cusp-directional torsion $\kappa_t^t$ of $f^t$ vanishes at $p$ if and only if $\eta\kappa_+$ vanishes at $p$, 
namely, $p$ is an exactly cusped point, 
where $t=1/\kappa_+(p)$. 
\end{prop}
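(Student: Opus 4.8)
The plan is to turn the vanishing of $\kappa_t^t$ into a tangency condition and then read that condition off geometrically. First, since $p$ is not a ridge point of $f$, Definition \ref{ridge} gives $\bm{v}\kappa_+(p)\neq0$, hence $d\kappa_+(p)\neq0$; by Theorem \ref{sing_para}~(1) the parallel surface $f^t$ is then a cuspidal edge at $p$, so its cusp-directional torsion $\kappa_t^t$ is well defined. The singular curve $\gamma^t$ of $f^t$ is the constant principal curvature line $\{\kappa_+=\kappa_+(p)\}$, so its tangent line at $p$ is $\ker d\kappa_+(p)$; consequently $\eta$ is tangent to $\gamma^t$ at $p$ exactly when $d\kappa_+(p)(\eta)=\eta\kappa_+(p)=0$, i.e. exactly when $p$ is an exactly cusped point. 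It therefore suffices to show that $\kappa_t^t(p)=0$ if and only if $\eta$ is tangent to $\gamma^t$ at $p$.

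For this I would apply the computation in the proof of Proposition \ref{curvline}~(1) to $f^t$ instead of $f$: in an adapted coordinate system for the cuspidal edge $f^t$, the second component of the principal vector relative to the bounded principal curvature of $f^t$ equals $-\kappa_t^t$ times a nonvanishing factor, exactly as the relation $\bm{v}_2=-\kappa_t\sqrt{\tilde{E}\tilde{G}-\tilde{F}^2}$ holds for $f$. Hence $\kappa_t^t(p)=0$ if and only if this principal vector is tangent to $\gamma^t$ at $p$, that is, the bounded principal direction of $f^t$ is tangent to its own singular curve at $p$. The remaining task is to identify that direction with $\eta$. On the regular part, parallels preserve principal directions because $df^t=df\circ(\mathrm{Id}-tW)$ for the shape operator $W$, and near $p$ the two principal curvatures of $f$ are distinct (one bounded, one unbounded), so there are no umbilics and the directions are well defined. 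Along $\gamma^t\setminus\{p\}$ we have $\kappa_+=1/t$, so $1-t\kappa_+=0$ and the principal curvature of $f^t$ descending from $\kappa_+$ is the unbounded one; thus the bounded principal curvature of $f^t$ descends from $\kappa_-$, and its direction is the $\kappa_-$-eigendirection of $W$.

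The crux is that at the cuspidal edge $p$ this $\kappa_-$-direction is $\eta$ itself: in the Murata--Umehara principal curvature line coordinate (Fact \ref{bhv_pr_map}) the null direction is precisely the principal direction carrying the unbounded principal curvature, or equivalently $I$ degenerates along $\eta$ on the singular curve and $I$-orthogonality of the two principal directions forces the $\kappa_-$-direction to tend to $\eta$. Granting this, the bounded principal direction of $f^t$ at $p$ is $\eta$, and combining with the two reductions gives $\kappa_t^t(p)=0\iff\eta\kappa_+(p)=0$, as claimed. I expect this last identification to be the main obstacle, since it couples a regular-part fact (preservation of principal directions) with a limiting statement at the singular point and requires checking that the bounded principal direction of $f^t$ extends continuously across $p$. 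As a safeguard I would verify the equivalence on the normal form \eqref{normal}, where $\eta=\partial_v$ and, by \eqref{diffk+}, $\eta\kappa_+(\bm{0})=(\kappa_+)_v(\bm{0})=-(4b_{12}^2+a_{20}b_{03}^2)/(2b_{03})$: computing $\kappa_t^t(\bm{0})$ from the frame $\{f^t_u,\phi^t,\nu\}$ via Lemma \ref{cuspidalcurv} should yield a nonzero multiple of $4b_{12}^2+a_{20}b_{03}^2$, which closes the argument directly.
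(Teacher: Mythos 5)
Your proposal is correct in substance but follows a genuinely different route from the paper. The paper proves Proposition \ref{exactcusp} by brute force on the Martins--Saji normal form \eqref{normal}: it parametrizes the singular curve $\sigma$ of $f^t$ by $\hat{\lambda}^t\circ\sigma=0$, takes the adapted pair of vector fields $(\bm{w},\bm{v})$ with $\bm{w}$ tangent to $\sigma$, and evaluates the cusp-directional torsion via the vector-field formula \cite[(5.1)]{ms}, obtaining the closed form $\kappa_t^t(0)=b_{20}^2(4b_{12}^2+a_{20}b_{03}^2)/(4b_{12}^3+b_{30}b_{03}^2)$, which is compared directly with $\eta\kappa_+(\bm{0})=(\kappa_+)_v(\bm{0})$ from \eqref{diffk+}. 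You instead argue conceptually: the pointwise version of the computation in Proposition \ref{curvline}~(1), applied to $f^t$, gives $\kappa_t^t(p)=0$ iff the bounded principal vector of $f^t$ is tangent to $S(f^t)=\gamma^t$ at $p$; then you identify that vector at $p$ with $\eta$, using that parallels preserve lines of curvature on the regular part, that the bounded curvature of $f^t$ descends from $\kappa_-$, and that the null direction is the limiting principal direction of the unbounded curvature (visible in the Murata--Umehara coordinate, where $f_v(p)=\bm{0}$ and $\Lambda_2(p)=[1:0]$, or via degeneracy of $I$ plus $I$-orthogonality). The continuity step you flag as the main obstacle does close: the bounded principal vector field of $f^t$ is smooth near $p$ by \eqref{pdircusp} applied to $f^t$ (legitimate since, $p$ not being a ridge point, Theorem \ref{sing_para}~(1) makes $f^t$ a cuspidal edge at $p$ and Theorem \ref{princi1} applies to it), and it agrees with the $\kappa_-$-line field of $f$ on the common regular set, which is dense near $p$. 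What each approach buys: the paper's computation is short, needs no limiting argument, and produces the explicit value of $\kappa_t^t(0)$; yours explains \emph{why} the equivalence holds --- tangency of the CPC line to $\eta$ is the same as tangency of the bounded principal direction of the parallel to its own singular curve --- and does not depend on the normal form. One small correction to your safeguard: you cannot compute $\kappa_t^t(\bm{0})$ from Lemma \ref{cuspidalcurv} in the coordinates of \eqref{normal}, because those coordinates are not adapted to $f^t$ (the singular curve of $f^t$ is $\gamma^t$, not the $u$-axis, and $f^t_v$ does not vanish along it); the correct fix is exactly the paper's device, namely the coordinate-free formula \cite[(5.1)]{ms} evaluated on the adapted pair $(\bm{w},\bm{v})$ along $\gamma^t$.
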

\begin{proof}
Let $f$ be a normal form as in \eqref{normal} and $\sigma$ be a singular curve of $f^t$ satisfting $\what{\lambda}^t(\sigma)=0$. 
Note that coefficients in \eqref{normal} satisfy $b_{20}\neq0$ and 
$4b_{12}^3+b_{30}b_{03}^2\neq0$ since $\kappa_+(\bm{0})=\kappa_\nu(0)=b_{20}$ and 
$\bm{0}$ is not a ridge point (see \cite[Lemma 2.2]{t1}). 
We assume that $(\kappa_+)_u(\bm{0})\neq0$. 
Then we can take $\sigma(v)=(u(v),v)$. 
Let $\bm{w}=u'\partial_u+\partial_v$ denote a vector field tangent to $\sigma$, 
where $u'=-(\kappa_+)_v/(\kappa_+)_u$. 
The pair $(\bm{w},\bm{v})$ gives an adapted pair of vector fields in the sense of \cite{ms}. 
Moreover, $\langle\bm{w}f^t,\bm{v}\bm{v}f^t\rangle=0$ holds at $\bm{0}$. 
By \cite[(5.1)]{ms}, we have
\begin{equation}
\kappa_t^t(0)=\frac{\det(\bm{w}f^t,\bm{v}\bm{v}f^t,\bm{w}\bm{v}\bm{v}f^t)}
{\|\bm{w}f^t\times\bm{v}\bm{v}f^t\|^2}(\bm{0})
=\frac{b_{20}^2(4b_{12}^2+a_{20}b_{03}^2)}{4b_{12}^3+b_{30}b_{03}^2}.\label{ktt}
\end{equation}
Comparing \eqref{diffk+} and \eqref{ktt}, we obtain the result.
\end{proof}
We now consider the case that $(\kappa_+)_u=0$ at $p$. 
Since this is equivalent to $\kappa_\nu'=0$ at $p$, 
we call such a point an \textit{extrema of the limiting normal curvature} $\kappa_\nu$. 
Therefore we have three special points on cuspidal edges which have 
special relations between the singular curve and the CPC line (see Figure \ref{fig:cpc}). 
It seems that exactly cusped points have not appeared in the literature. 

\begin{figure}[htbp]
  \begin{center}
    \begin{tabular}{c}

      \begin{minipage}{0.3\hsize}
        \begin{center}
          \includegraphics[clip, width=3.5cm]{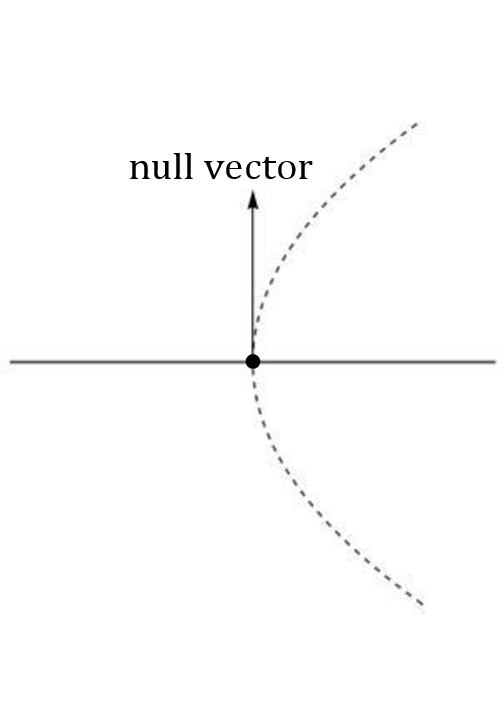}
          \hspace{0.5cm} exactly cusped point
        \end{center}
      \end{minipage}

      \begin{minipage}{0.3\hsize}
        \begin{center}
          \includegraphics[clip, width=3.5cm]{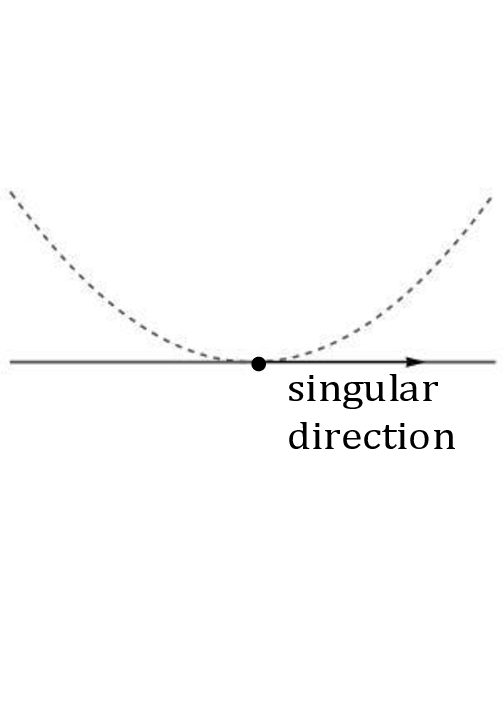}
          \hspace{0.5cm} extrema of $\kappa_\nu$
        \end{center}
      \end{minipage}

      \begin{minipage}{0.3\hsize}
        \begin{center}
          \includegraphics[clip, width=3.5cm]{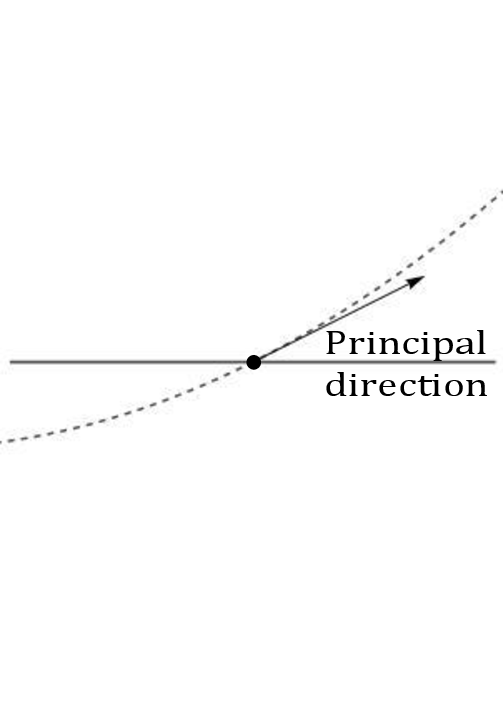}
          \hspace{0.5cm} ridge point
        \end{center}
      \end{minipage}

    \end{tabular}
    \caption{Figures of the singular curve and the CPC line near a cuspidal edge. 
    The solid curve is the singular curve and the dotted one is the CPC line through $p$.}
    \label{fig:cpc}
  \end{center}
\end{figure}

\section{Extended distance squared functions on wave fronts}\label{distsqared}
We consider the extended distance squared functions on fronts. 
We study relations between singularities of extended distance squared functions and principal curvatures. 
For singularities of distance squared functions on surfaces with other corank one singularities, see \cite{fh2,mn}. 

Let $f:\Sig\to\R^3$ be a front, $\nu$ a unit normal vector and $p$ be a non-degenerate singular point of the second kind. 
(For cuspidal edges, see \cite{t1}.) 
Assume that $\kappa_+$ is bounded at $p$ and $\kappa_+(p)\neq0$ in this section. 

We set the function $\psi:\Sig\to\R$ as
\begin{equation}\label{dist2}
\psi(u,v)=-\frac{1}{2}(\|\bm{x}_0-f(u,v)\|^2-t_0^2),
\end{equation} 
where $\bm{x}_0\in\R^3$ and $t_0\in\R\setminus\{0\}$. 
We call $\psi$ the {\it extended distance squared function with respect to $\bm{x}_0$}. 
\begin{lem}\label{singpsi}
For the function $\psi$ as in \eqref{dist2}, $\psi(p)=\psi_u(p)=\psi_v(p)=0$ if $\bm{x}_0=f(p)+t_0\nu(p)$. 
\end{lem}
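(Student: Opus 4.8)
The plan is to verify the three conditions $\psi(p)=\psi_u(p)=\psi_v(p)=0$ by direct substitution of $\bm{x}_0=f(p)+t_0\nu(p)$ into $\psi$ and its first partial derivatives, exploiting the orthogonality of $\nu$ to the tangent plane. First I would compute $\psi(p)$: substituting $\bm{x}_0-f(p)=t_0\nu(p)$ gives $\|\bm{x}_0-f(p)\|^2=t_0^2\|\nu(p)\|^2=t_0^2$, since $\nu$ is a unit normal. Hence $\psi(p)=-\tfrac{1}{2}(t_0^2-t_0^2)=0$, which disposes of the first condition immediately.

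For the derivatives, I would differentiate \eqref{dist2} directly. Writing $\psi=-\tfrac{1}{2}(\langle\bm{x}_0-f,\bm{x}_0-f\rangle-t_0^2)$, the chain rule yields
\begin{equation*}
\psi_u=\langle\bm{x}_0-f,f_u\rangle,\qquad \psi_v=\langle\bm{x}_0-f,f_v\rangle,
\end{equation*}
where the sign works out because the derivative of $\bm{x}_0-f$ with respect to $u$ is $-f_u$ and this cancels the overall factor $-\tfrac{1}{2}\cdot 2$. Evaluating at $p$ and substituting $\bm{x}_0-f(p)=t_0\nu(p)$ gives $\psi_u(p)=t_0\langle\nu(p),f_u(p)\rangle$ and $\psi_v(p)=t_0\langle\nu(p),f_v(p)\rangle$. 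Since $\nu$ is by definition a unit normal vector to $f$, it is orthogonal to both $f_u$ and $f_v$ wherever they are defined, so $\langle\nu,f_u\rangle=\langle\nu,f_v\rangle=0$ identically on $V$; in particular both inner products vanish at $p$, giving $\psi_u(p)=\psi_v(p)=0$.

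I do not expect any genuine obstacle here: the statement is a routine consequence of the normal vector being unit-length and perpendicular to the image tangent vectors, together with the specific choice of center $\bm{x}_0$ placed at the parallel-surface point $f(p)+t_0\nu(p)$. The only mild point of care is that $p$ is a singular point, so $f_u$ or $f_v$ may degenerate there; however this causes no difficulty because the relations $\langle\nu,f_u\rangle=0$ and $\langle\nu,f_v\rangle=0$ hold on all of $V$ (they express that $\nu$ is normal to the frontal, a condition valid even at singular points where $f_u$ or $f_v$ vanishes), so passing to the limit at $p$ is unproblematic. Thus the three equalities follow, completing the proof.
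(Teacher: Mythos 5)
Your proof is correct and takes essentially the same route as the paper's: direct differentiation of $\psi$ combined with the orthogonality $\langle\nu,f_u\rangle=\langle\nu,f_v\rangle=0$, which (as you rightly note) holds on all of $V$ by the frontal condition, singular points included. The paper's proof only differs cosmetically, writing $\psi_u=\langle\bm{x}_0-f,v\phi-\epsilon f_v\rangle$ in an adapted coordinate system where $f_u=v\phi-\epsilon f_v$ and then invoking $\langle\phi,\nu\rangle=\langle f_v,\nu\rangle=0$ at $p$ --- the same computation expressed in the frame $\{\phi,f_v,\nu\}$.
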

\begin{proof}
Let us take an adapted coordinate system $(U;u,v)$ centered at $p$ with 
the null vector field $\eta=\partial_u+\epsilon(u)\partial_v$. 
In this case, $\psi(p)=0$ follows from \eqref{dist2}. 
By direct computations, we have $\psi_u=\langle{\bm{x}_0-f,v\phi-\epsilon f_v}\rangle$ 
and $\psi_v=\langle{\bm{x}_0-f,f_v}\rangle$. 
Since $\langle{\phi,\nu}\rangle=\langle{f_v,\nu}\rangle=0$ at $p$, we get the conclusion.
\end{proof}
Take $\bm{x}_0=f(p)+t_0\nu(p)$. 
We are interested in the case of $t_0=1/\kappa_+(p)$, because 
$\bm{x}_0$ corresponds to the image of a singular point of a parallel surface $f^t$ with $t=1/\kappa_+(p)$, 
that is, $\bm{x}_0$ coincides with a {\it focal point} of $f$ at $p$. 
In such a case, $\psi$ measures contact of $f$ with the {\it principal curvature sphere} at $p$ (cf. \cite{ifrt,mn}). 
\begin{prop}\label{rankzero}
If $\bm{x}_0=f(p)+\nu(p)/\kappa_+(p)$ and $t_0=1/\kappa_+(p)$, 
then $j^2\psi=0$ holds, where $j^2\psi$ is the $2$-jet of $\psi$ at $p$.
\end{prop}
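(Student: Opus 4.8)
The plan is to use Lemma \ref{singpsi} to dispose of the constant and linear terms of the $2$-jet and then to verify directly that the full Hessian of $\psi$ vanishes at $p$. Indeed, with $\bm{x}_0=f(p)+t_0\nu(p)$ and $t_0=1/\kappa_+(p)$ the hypotheses of Lemma \ref{singpsi} hold, so $\psi(p)=\psi_u(p)=\psi_v(p)=0$; hence $j^2\psi=0$ is equivalent to $\psi_{uu}(p)=\psi_{uv}(p)=\psi_{vv}(p)=0$. Working in an adapted coordinate system $(U;u,v)$ of the second kind centered at $p$, I would start from the first derivatives $\psi_u=\langle\bm{x}_0-f,f_u\rangle$ and $\psi_v=\langle\bm{x}_0-f,f_v\rangle$ obtained in the proof of Lemma \ref{singpsi}, and differentiate once more to get $\psi_{uu}=-\|f_u\|^2+\langle\bm{x}_0-f,f_{uu}\rangle$, $\psi_{uv}=-\langle f_u,f_v\rangle+\langle\bm{x}_0-f,f_{uv}\rangle$ and $\psi_{vv}=-\|f_v\|^2+\langle\bm{x}_0-f,f_{vv}\rangle$.

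Next I would evaluate these at $p$. Since $p$ is a singular point of the second kind in an adapted coordinate system, $f_u(p)=\bm{0}$, so the $\|f_u\|^2$ and $\langle f_u,f_v\rangle$ terms drop out; moreover $\bm{x}_0-f(p)=t_0\nu(p)$, so each remaining inner product becomes $t_0\langle\nu,f_{\ast\ast}\rangle(p)$. To handle these I would use the structure relations recalled in the proof of Lemma \ref{limnormalsec}, namely $f_{uu}=v\phi_u-\epsilon'(u)f_v-\epsilon(u)f_{uv}$ and $f_{uv}=\phi+v\phi_v-\epsilon(u)f_{vv}$, which at $p$ (where $v=0$ and $\epsilon(0)=0$) give $f_{uu}(p)=-\epsilon'(0)f_v(p)$ and $f_{uv}(p)=\phi(p)$. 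Since $\phi$ and $f_v$ are both tangent to $f$, we have $\langle\nu,\phi\rangle=\langle\nu,f_v\rangle=0$, whence $\langle\nu,f_{uu}\rangle(p)=\langle\nu,f_{uv}\rangle(p)=0$ and therefore $\psi_{uu}(p)=\psi_{uv}(p)=0$.

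It remains to treat $\psi_{vv}(p)=-\|f_v(p)\|^2+t_0\langle\nu,f_{vv}\rangle(p)$. Here I would recognize $\|f_v\|^2=\hat{G}$ and, differentiating $\langle f_v,\nu\rangle=0$, the identity $\langle\nu,f_{vv}\rangle=-\langle f_v,\nu_v\rangle=\hat{N}$, so that $\psi_{vv}(p)=-\hat{G}(p)+t_0\hat{N}(p)$. The key input — and the only place where the particular value of $t_0$ enters — is the identity $\kappa_+(p)=\hat{N}(p)/\hat{G}(p)$ established along the $u$-axis in the proof of Theorem \ref{princi1}(2) under the standing assumption $\mu_c(p)>0$; combined with $t_0=1/\kappa_+(p)$ this yields $t_0\hat{N}(p)=\hat{G}(p)$ and hence $\psi_{vv}(p)=0$, giving $j^2\psi=0$. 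The computation is otherwise routine; the main thing to get right is the tangency bookkeeping that kills the $uu$- and $uv$-terms, together with the correct identification of $\kappa_+(p)$ with $\hat{N}(p)/\hat{G}(p)$ that makes the $vv$-term cancel.
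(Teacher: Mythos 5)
Your proposal is correct and takes essentially the same route as the paper's own proof: both dispose of the $1$-jet via Lemma \ref{singpsi}, compute the three second derivatives in an adapted coordinate system of the second kind, kill $\psi_{uu}(p)$ and $\psi_{uv}(p)$ using $f_u=v\phi-\epsilon(u)f_v$ together with $\langle\nu,\phi\rangle=\langle\nu,f_v\rangle=0$, and cancel $\psi_{vv}(p)=-\hat{G}(p)+t_0\hat{N}(p)$ via the identity $1/\kappa_+(p)=\hat{G}(p)/\hat{N}(p)$ from the proof of Theorem \ref{princi1}(2). Your tangency bookkeeping (evaluating $f_{uu}(p)=-\epsilon'(0)f_v(p)$ and $f_{uv}(p)=\phi(p)$) is exactly the substituted form of the displayed formulas in the paper, so there is nothing to add.
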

\begin{proof}
Take an adapted coordinate system $(U;u,v)$ around $p$. 
By Lemma \ref{singpsi}, we see that $\psi=\psi_u=\psi_v=0$ at $p$. 
By direct calculations, we have 
\begin{align*}
\psi_{uu}&=-\|v\phi-\epsilon f_v\|^2+\langle\bm{x}_0-f,v\phi_u-\epsilon' f_v-\epsilon f_{uv}\rangle,\\
\psi_{uv}&=-\langle f_v,v\phi-\epsilon f_v\rangle+\langle\bm{x}_0-f,\phi+v\phi_v-\epsilon f_{vv}\rangle,\\
\psi_{vv}&=-\| f_v\|^2+\langle\bm{x}_0-f,f_{vv}\rangle.
\end{align*}
Thus $\psi_{uu}=\psi_{uv}=0$ hold at $p$ since $\langle{f_v,\nu}\rangle=\langle{\phi,\nu}\rangle=0$. 
Moreover, it follows that 
$\psi_{vv}=-\what{G}+\langle{\nu,f_{vv}}\rangle/\kappa_+(p)=-\what{G}+\what{N}/\kappa_+(p)=0$ 
at $p$ since $1/\kappa_+(p)=\what{G}(p)/\what{N}(p)$. 
Thus we have the assertion. 
\end{proof}
We note that Martins and Nu{\~ n}o-Ballesteros \cite{mn} investigate singularities of distance squared functions 
in more general situation. 
They showed a similar result as Proposition \ref{rankzero} by using an 
{\it umbilic curvature} $\kappa_u$ \cite[Theorem 2.15]{mn}. 
It is known that $|\kappa_\nu(p)|=\kappa_u(p)$ holds when $p$ is acuspidal edge (\cite{ms}). 
Thus Proposition \ref{rankzero} might be a special case of \cite[Theorem 2.15]{mn}.

Proposition \ref{rankzero} implies that $\psi$ may have a $D_4$ singularity at $p$ 
if $\bm{x}_0$ coincides with the focal point of $f$ at $p$, where a function-germ 
$h:(\R^2,\bm{0})\to(\R,0)$ has a {\it $D_4$ singularity} at $\bm{0}$ if $h$ is $\mathcal{R}$-equivalent to the germ 
$(u,v)\mapsto u^{3}\pm uv^2$ at $\bm{0}$ (cf. \cite[pages 264 and 265]{bg}). 
Therefore we consider the condition that $\psi$ has a $D_4$ singularity at $p$ in terms of geometric properties of $f$. 

For cuspidal edges, 
suppose that $\kappa_+$ is bounded near $p$ and $\kappa_+(p)\neq0$. 
Then the function $\psi$ with $\bm{x}_0=f(p)+\nu(p)/\kappa_+(p)$ and $t_0=1/\kappa_+(p)$ 
has a $D_4$ singularity if and only if 
$\kappa_i(p)(4\kappa_t(p)^3+\kappa_i(p)\kappa_c(p)^2)=0$ (\cite[Theorem 3.3]{t1}). 
Here $\kappa_i$ is a geometric invariant called the {\it edge inflectional curvature} defined in \cite[Section 5.3]{ms}. 
We remark that Oset Sinha and Tari \cite{of2} study singularities of 
height functions and orthogonal projections of cuspidal edges. 

Let $f:\Sig\to\R^3$ be a front and $p$ a sngular point of the second kind. 
For a function $\psi:V\to\R$, set 
\begin{multline}\label{delta}
\Delta_{\psi}=((\psi_{uuu})^2(\psi_{vvv})^2-6\psi_{uuu}\psi_{uuv}\psi_{uvv}\psi_{vvv}\\
-3(\psi_{uuv})^2(\psi_{uvv})^2+4(\psi_{uuv})^3\psi_{vvv}+4\psi_{uuu}(\psi_{uvv})^3)(p).
\end{multline}
It is known that the function $\psi$ is $\mathcal{R}$-equivalent to $u^3+uv^2$ (resp. $u^3-uv^2$) 
if and only if $j^2\psi=0$ and $\Delta_{\psi}>0$ (resp. $\Delta_{\psi}<0$) hold
(see \cite[Lemma 3.1]{s1}, see also \cite[Theorem 4.2]{fh1}). 

\begin{thm}\label{D4front}
Let $f:\Sig\to\R^3$ be a front and $p$ a singular point of the second kind. 
Suppose that $\kappa_+$ is bounded near $p$ and $\kappa_+(p)\neq0$. 
Then $\psi$ as in \eqref{dist2} 
with $\bm{x}_0=f(p)+\nu(p)/\kappa_+(p)$ and $t_0=1/\kappa_+(p)$ has a $D_4$ singularity at $p$ 
if and only if $p$ is not a ridge point of $f$. 
\end{thm}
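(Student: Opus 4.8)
The plan is to convert the $D_4$ condition into the non-vanishing of the discriminant $\Delta_\psi$ of \eqref{delta}, and then to match that discriminant with the ridge invariant $\bm{v}\kappa_+(p)$. First I would apply Proposition \ref{rankzero}: for $\bm{x}_0=f(p)+\nu(p)/\kappa_+(p)$ and $t_0=1/\kappa_+(p)$ one has $j^2\psi(p)=0$, so by the criterion recalled just before the statement, $\psi$ has a $D_4$ singularity at $p$ if and only if $\Delta_\psi(p)\neq0$. The whole problem thus reduces to proving that $\Delta_\psi(p)\neq0$ holds if and only if $\bm{v}\kappa_+(p)\neq0$, the latter being exactly the condition that $p$ is not a ridge point (Definition \ref{ridge}).

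Next I would compute the third derivatives of $\psi$ at $p$ in an adapted coordinate system, using $\psi_{ijk}=-\langle f_{ik},f_j\rangle-\langle f_i,f_{jk}\rangle-\langle f_k,f_{ij}\rangle+\langle\bm{x}_0-f,f_{ijk}\rangle$ together with $f_u(p)=\bm{0}$, $f_{uv}(p)=\phi$, $f_{uu}(p)=-\epsilon'(0)f_v$, $\bm{x}_0-f(p)=\nu/\kappa_+(p)$ and $\kappa_+(p)=\hat{N}/\hat{G}$ (the last being forced by $\psi_{vv}(p)=0$, exactly as in the proof of Proposition \ref{rankzero}). The two structurally decisive outputs are $\psi_{uuu}(p)=0$ — because $f_{uuu}(p)$ is a linear combination of $f_v$ and $\phi$, both orthogonal to $\nu$ — and $\psi_{uuv}(p)=\hat{L}\hat{G}/\hat{N}\neq0$, the non-vanishing coming from $\hat{L}(p)\neq0$ for a front (as noted before Lemma \ref{limnormalsec}). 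Substituting $\psi_{uuu}(p)=0$ into \eqref{delta} collapses the discriminant to
\[
\Delta_\psi(p)=\psi_{uuv}^2\bigl(4\psi_{uuv}\psi_{vvv}-3\psi_{uvv}^2\bigr)(p),
\]
so, since $\psi_{uuv}(p)\neq0$, everything is governed by the residual factor $4\psi_{uuv}\psi_{vvv}-3\psi_{uvv}^2$. Geometrically this reflects that the binary cubic $j^3\psi$ always has the null direction $\eta=\partial_u$ of $f$ as a simple root, and a $D_4$ fails precisely when the complementary quadratic factor degenerates.

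The main step, which I expect to be the real obstacle, is to identify this residual factor with $\bm{v}\kappa_+(p)$ up to a nowhere-vanishing factor. I would compute $\psi_{uvv}(p)=2(\hat{M}\hat{G}-\hat{F}\hat{N})/\hat{N}$ and $\psi_{vvv}(p)=-3\langle f_v,f_{vv}\rangle+\hat{G}\langle\nu,f_{vvv}\rangle/\hat{N}$, re-expressing $\langle f_v,f_{vv}\rangle$ and $\langle\nu,f_{vvv}\rangle$ through $v$-derivatives of the coefficients $\hat{E},\dots,\hat{N}$. On the other side, from the principal vector $\bm{v}=(-\hat{M}+\kappa_+\hat{F},\hat{L})$ of \eqref{pdir_sec} and the values $(\kappa_+)_u(p),(\kappa_+)_v(p)$ obtained by differentiating \eqref{prinsec}, I would form $\bm{v}\kappa_+(p)=\bm{v}_1(\kappa_+)_u(p)+\hat{L}(\kappa_+)_v(p)$. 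The crux is then the identity
\[
4\psi_{uuv}\psi_{vvv}-3\psi_{uvv}^2=c\,\bm{v}\kappa_+(p)\quad\text{at }p
\]
for an explicit non-zero factor $c$ assembled from $\hat{E},\dots,\hat{N}$; this is a genuine but mechanical symbolic verification, best handled by clearing the common $\hat{N}$-denominators and matching the $v$-derivative terms on both sides. That the power of $\bm{v}\kappa_+(p)$ is one, rather than two, is consistent with $\Delta_\psi$ taking both signs, i.e. with both $D_4^\pm$ occurring. Granting the identity, $\Delta_\psi(p)=c\,\psi_{uuv}(p)^2\,\bm{v}\kappa_+(p)$ with $c\neq0$ and $\psi_{uuv}(p)\neq0$, whence $\Delta_\psi(p)\neq0$ if and only if $\bm{v}\kappa_+(p)\neq0$, i.e. if and only if $p$ is not a ridge point, which is the theorem.
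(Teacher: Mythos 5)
Your proposal is correct and follows essentially the same route as the paper: reduce via Proposition \ref{rankzero} and the recalled criterion to $\Delta_\psi(p)\neq0$, show $\psi_{uuu}(p)=0$ and $\psi_{uuv}(p)=\hat{G}\hat{L}/\hat{N}\neq0$ so that $\Delta_\psi$ collapses to $\psi_{uuv}^2\bigl(4\psi_{uuv}\psi_{vvv}-3\psi_{uvv}^2\bigr)$, and then identify the residual factor with $\bm{v}\kappa_+(p)$. The one thing worth noting is that the paper executes the crux identity you deferred by first passing to a \emph{strongly adapted} coordinate system ($\langle f_{uv},f_v\rangle(p)=0$, whence $\hat{F}(p)=\hat{G}_u(p)=0$), which tames the $v$-derivative bookkeeping and yields exactly $4\psi_{uuv}\psi_{vvv}-3\psi_{uvv}^2=\bigl(4\hat{G}^3/\hat{N}^2\bigr)\bm{v}\kappa_+(p)$ at $p$, i.e.\ your nonzero constant is $c=4\hat{G}^3/\hat{N}^2$.
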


To prove this theorem, 
we take a special adapted coordinate system $(U;u,v)$ centered at $p$ 
called a {\it strongly adapted coordinate system} 
which satisfies $\langle{f_{uv},f_v}\rangle=0$ at $p$ (see \cite[Definition 3.6]{msuy}). 
Under this coordinate system, we see that $\what{F}=\what{G}_u=0$ at $p$ since $\phi(p)=f_{uv}(p)$. 
We prepare a lemma. 
\begin{lem}\label{del}
Under the above conditions, 
$\Delta_{\psi}\neq0$ 
if and only if 
\begin{equation}\label{d4psi}
4\psi_{uuv}\psi_{vvv}-3(\psi_{uvv})^2=
\frac{4\what{G}}{\what{N}^2}(\what{L}(\what{G}\what{N}_v-\what{G}_v\what{N})-
\what{G}\what{M}(\what{N}_u+\what{M}))\neq0
\end{equation}
at $p$.
\end{lem}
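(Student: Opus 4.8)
The plan is to regard $\Delta_\psi$ as (a multiple of) the discriminant of the cubic $3$-jet of $\psi$ at $p$ and to evaluate everything in the strongly adapted coordinate system. Since $j^2\psi=0$ at $p$ by Proposition \ref{rankzero}, the expression \eqref{delta} involves only the third derivatives $\psi_{uuu},\psi_{uuv},\psi_{uvv},\psi_{vvv}$, so the whole statement reduces to computing these at $p$. Writing $g=\bm{x}_0-f$, so that $\psi=-\tfrac12(\langle g,g\rangle-t_0^2)$ with $g_u=-f_u$, $g_v=-f_v$ and $g(p)=\nu(p)/\kappa_+(p)$, and using that $p$ is of the second kind with $\epsilon(0)=0$ (hence $f_u(p)=\bm{0}$), I would obtain $\psi_{uuu}(p)=\langle g,f_{uuu}\rangle(p)$ and analogous formulas for the other third derivatives.

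The first reduction is that $\psi_{uuu}(p)=0$. From the structure equations $f_u=v\phi-\epsilon f_v$, $f_{uu}=v\phi_u-\epsilon'f_v-\epsilon f_{uv}$ (see Lemma \ref{limnormalsec} and \cite[Proposition 1.9]{msuy}) one computes $f_{uuu}(p)=-\epsilon''(0)f_v(p)-2\epsilon'(0)f_{uv}(p)$; since $f_{uv}(p)=\phi(p)$ and both $f_v,\phi$ are orthogonal to $\nu$, we get $\langle g,f_{uuu}\rangle(p)=\tfrac{1}{\kappa_+(p)}\langle\nu,f_{uuu}\rangle(p)=0$. With $\psi_{uuu}(p)=0$, the formula \eqref{delta} collapses to
\[
\Delta_\psi=(\psi_{uuv})^2\bigl(4\psi_{uuv}\psi_{vvv}-3(\psi_{uvv})^2\bigr)(p),
\]
so the equivalence will follow as soon as $\psi_{uuv}(p)\neq0$. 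Using $f_{uu}(p)=-\epsilon'(0)f_v(p)$, $f_{uuv}(p)=\phi_u(p)-\epsilon'(0)f_{vv}(p)$, together with $\langle\nu,\phi_u\rangle=\hat L$, $\langle\nu,f_{vv}\rangle=\hat N$ and $1/\kappa_+(p)=\hat G(p)/\hat N(p)$, a direct computation gives $\psi_{uuv}(p)=\hat G\hat L/\hat N$; this is nonzero because $\hat L(p)\neq0$ ($f$ being a front), $\hat G(p)>0$ and $\hat N(p)\neq0$ (as $\kappa_+(p)\neq0$). This proves the stated equivalence.

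It remains to evaluate $\psi_{uvv}(p)$ and $\psi_{vvv}(p)$ and to verify the closed formula. Here I would use the strongly adapted conditions $\hat F(p)=\hat G_u(p)=0$ and the Weingarten expressions of Lemma \ref{weinsec}, which at $p$ reduce to $\nu_u(p)=-(\hat L/\hat E)\phi$ and $\nu_v(p)=-(\hat M/\hat E)\phi-(\hat N/\hat G)f_v$. Differentiating $f_{uv}=\phi+v\phi_v-\epsilon f_{vv}$ gives $f_{uvv}(p)=2\phi_v(p)$, whence $\psi_{uvv}(p)=\tfrac{\hat G}{\hat N}\langle\nu,f_{uvv}\rangle(p)=2\hat G\hat M/\hat N$ because $\langle\nu,\phi_v\rangle=\hat M$; the same identity yields the relation $\hat N_u+\tfrac{\hat L}{\hat E}\langle\phi,f_{vv}\rangle=2\hat M$, which I use to eliminate the single auxiliary inner product $\langle\phi,f_{vv}\rangle$ that occurs in $\psi_{vvv}(p)=-\hat G_v+\tfrac{\hat G\hat N_v}{\hat N}+\tfrac{\hat G\hat M}{\hat N}\cdot\tfrac{\langle\phi,f_{vv}\rangle}{\hat E}$. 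Substituting these values into $4\psi_{uuv}\psi_{vvv}-3(\psi_{uvv})^2$, the $\hat L$-factors cancel and the $\hat M^2$-terms combine (via $8-12=-4$) into $-\hat G\hat M(\hat N_u+\hat M)$, producing exactly $\tfrac{4\hat G}{\hat N^2}\bigl(\hat L(\hat G\hat N_v-\hat G_v\hat N)-\hat G\hat M(\hat N_u+\hat M)\bigr)$. The main obstacle is precisely this last bookkeeping: one has to express $\langle\nu,f_{vvv}\rangle$ and $\langle\nu,f_{uvv}\rangle$ through $\hat N_u,\hat N_v,\hat G_v,\hat M$ and the lone extra quantity $\langle\phi,f_{vv}\rangle$, and then check that this quantity is removed cleanly by the integrability relation $f_{uvv}(p)=2\phi_v(p)$.
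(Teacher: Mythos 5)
Your proposal is correct and follows essentially the same route as the paper's proof: strongly adapted coordinates, showing $\psi_{uuu}(p)=0$ and $\psi_{uuv}(p)=\hat{G}\hat{L}/\hat{N}\neq0$ so that $\Delta_\psi$ collapses to $(\psi_{uuv})^2\bigl(4\psi_{uuv}\psi_{vvv}-3(\psi_{uvv})^2\bigr)$, then computing $\psi_{uvv}(p)=2\hat{G}\hat{M}/\hat{N}$ and $\psi_{vvv}(p)$ via Lemma \ref{weinsec} and eliminating $\langle\phi,f_{vv}\rangle$ through the identity $\hat{N}_u=-\hat{L}\langle\phi,f_{vv}\rangle/\hat{E}+2\hat{M}$ coming from $f_{uvv}(p)=2\phi_v(p)$. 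All your intermediate formulas (including the cancellation of the $\epsilon'$-terms in $\psi_{uuv}$ and the $8-12=-4$ bookkeeping for the $\hat{M}^2$-terms) agree with the paper's computation.
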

\begin{proof}
We take a strongly adapted coordinate system $(U;u,v)$ around $p$. 
Direct calculations show that 
$$\psi_{uuu}=t_0\langle\nu,f_{uuu}\rangle,\quad
\psi_{uuv}=t_0\langle\nu,f_{uuv}\rangle-\langle f_{v},f_{uu}\rangle$$
hold at $p$, where $t_0=1/\kappa_+(p)=\what{G}(p)/\what{N}(p)$. 
Since $f_{uu}=-\epsilon' f_v$, $f_{uuu}=-\epsilon''f_v-2\epsilon'\phi$ and $f_{uuv}=\phi_u-\epsilon'f_{vv}$ at $p$, 
it follows that $\psi_{uuu}=0$ and 
\begin{equation}\label{psi_uuv}
\psi_{uuv}=
t_0\langle{\nu,\phi_u}\rangle+\epsilon'(-t_0\langle\nu,f_{vv}\rangle+\|f_v\|^2)=t_0\langle{\nu,\phi_u}\rangle=
\what{G}\what{L}/\what{N}\neq0
\end{equation}
hold at $p$. 
Thus $\Delta_{\psi}$ as in \eqref{delta} can be written as 
$$\Delta_{\psi}=(\psi_{uuv}(p))^2(4\psi_{uuv}(p)\psi_{vvv}(p)-3(\psi_{uvv}(p))^2).$$ 
This implies that $\Delta_{\psi}\neq0$ if and only if $4\psi_{uuv}(p)\psi_{vvv}(p)-3(\psi_{uvv}(p))^2\neq0$. 

We consider the form of $4\psi_{uuv}(p)\psi_{vvv}(p)-3(\psi_{uvv}(p))^2\neq0$. 
By direct computations, we have 
$$\psi_{uvv}=t_0\langle{\nu,f_{uvv}}\rangle,\quad \psi_{vvv}=t_0\langle{\nu,f_{vvv}}\rangle-3\langle{f_v,f_{vv}}\rangle$$
at $p$. 
Since $f_{uvv}=2\phi_v$ at $p$, it follows that 
\begin{equation}\label{psi_uvv}
\psi_{uvv}(p)=2t_0\what{M}(p)=\frac{2\what{G}(p)\what{M}(p)}{\what{N}(p)}.
\end{equation} 

We now deal with $\psi_{vvv}(p)$. 
It follows that $\langle{\nu,f_{v}}\rangle=0$ and 
$\langle{\nu,f_{vv}}\rangle=-\langle{\nu_v,f_v}\rangle=\what{N}$ on $U$. 
So $\langle{\nu,f_{vvv}}\rangle=\what{N}_v-\langle{\nu_v,f_{vv}}\rangle$ holds. 
By Lemma \ref{weinsec}, $\langle{\nu_v,f_{vv}}\rangle$ is written as 
$$\langle{\nu_v,f_{vv}}\rangle=
-\frac{\what{M}}{\what{E}}\langle{\phi,f_{vv}}\rangle-\frac{\what{N}}{\what{G}}\langle{f_v,f_{vv}}\rangle$$
at $p$. 
On the other hand, 
$\what{N}_u=\langle{\nu_u,f_{vv}}\rangle+\langle{\nu,f_{uvv}}\rangle=-\what{L}\langle{\phi,f_{vv}}\rangle/\what{E}+2\what{M}$ 
at $p$ by Lemma \ref{weinsec}. 
Hence we have 
$\langle{\phi,f_{vv}}\rangle=-\what{E}(\what{N}_u-2\what{M})/\what{L}$ and
\begin{equation}\label{psi_vvv}
\psi_{vvv}=\frac{\what{G}\what{N}_v-\what{G}_v\what{N}}{\what{N}}-
\frac{\what{G}\what{M}(\what{N}_u-2\what{M})}{\what{L}\what{N}}
\end{equation}
at $p$, where we used $2\langle{f_v,f_{vv}}\rangle=\what{G}_v$. 
By \eqref{psi_uuv}, \eqref{psi_uvv} and \eqref{psi_vvv}, $4\psi_{uuv}\psi_{vvv}-3(\psi_{uvv})^2$ can be written as 
\begin{equation*}
4\psi_{uuv}\psi_{vvv}-3(\psi_{uvv})^2=
\frac{4\what{G}}{\what{N}^2}(\what{L}(\what{G}\what{N}_v-\what{G}_v\what{N})-\what{G}\what{M}(\what{N}_u+\what{M}))
\end{equation*}
at $p$. Thus we have the assertion.
\end{proof}

\begin{proof}[Proof of Theorerm \ref{D4front}]
Let us take a strongly adapted coordinate system $(U;u,v)$ centered at $p$. 
Then we note that $\what{F}=\what{G}_u=0$ holds at $p$. 
The differentials $(\kappa_+)_u$ and $(\kappa_+)_v$ are given by 
$$(\kappa_+)_u=\frac{\what{N}_u}{\what{G}},\quad
(\kappa_+)_v=\frac{-\what{G}\what{M}^2+\what{L} (\what{G}\what{N}_v-\what{G}_v\what{N})}
{\what{G}^2\what{L}}$$
at $p$. 
Since the principal vector $\bm{v}$ as in \eqref{pdir_sec} is written as $\bm{v}=(-\what{M},\what{L})$ at $p$, 
we have 
\begin{align}\label{strongridge}
\bm{v}\kappa_+(p)&=-\what{M}(p)(\kappa_+)_u(p)+\what{L}(p)(\kappa_+)_v(p)\\
&=\frac{1}{\what{G}(p)^2}(\what{L}(p) (\what{G}(p)\what{N}_v(p)-\what{G}_v(p)\what{N}(p))-
\what{G}(p)\what{M}(p)(\what{N}_u(p)+\what{M}(p))).\nonumber
\end{align}
Comparing \eqref{strongridge} and \eqref{d4psi} in Lemma \ref{del}, 
$\bm{v}\kappa_+(p)\neq0$, namely, $p$ is not a ridge point of $f$ 
if and only if $4\psi_{uuv}(p)\psi_{vvv}(p)-3(\psi_{uvv}(p))^2\neq0$. 
This implies that the number $\Delta_{\psi}$ defined as \eqref{delta} does not vanish by Lemma \ref{del}. 
\end{proof}
We remark that the condition that $f$ is a front in Theorem \ref{D4front} is needed 
for $\psi$ to have a $D_4$ singularity at $p$. 
In fact, for a frontal $f:\Sig\to\R^3$ with a singular point of the admissible second kind $p$, we have the following.
\begin{prop}\label{D5frontal}
Let $f:\Sig\to\R^3$ be a frontal but not a front and $p$ a singular point of the admissible second kind. 
Then $\psi$ with $\bm{x}_0=f(p)+\nu(p)/\kappa_\nu(p)$ and $t_0=1/\kappa_\nu(p)$ 
does not have a $D_4$ singularity at $p$. 
\end{prop}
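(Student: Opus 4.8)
The plan is to show that the discriminant $\Delta_\psi$ of \eqref{delta} automatically vanishes at $p$, which by the criterion recalled just before Theorem \ref{D4front} already excludes a $D_4$ singularity. The whole argument hinges on the observation that a frontal which is not a front satisfies $\hat{L}(p)=0$.

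First I would put the hypothesis into invariant form. Since $p$ is of the admissible second kind and $f$ is a frontal but not a front, \cite[Proposition 3.2]{msuy} gives $2\hat{H}(p)=\mu_c(p)=0$; as $\mu_c(p)=\hat{G}(p)\hat{L}(p)/\|\phi(p)\times f_v(p)\|^2$ with $\hat{G}(p)\neq0$ and $\|\phi(p)\times f_v(p)\|^2\neq0$, this forces $\hat{L}(p)=0$. I would then fix a strongly adapted coordinate system centered at $p$, exactly as in the proof of Lemma \ref{del}, so that $\hat{F}=\hat{G}_u=0$ at $p$. Because $p$ is admissible, Lemma \ref{limnormalsec} gives $\kappa_\nu(p)=\hat{N}(p)/\hat{G}(p)$, hence the prescribed data satisfy $t_0=1/\kappa_\nu(p)=\hat{G}(p)/\hat{N}(p)$ and $\bm{x}_0-f(p)=t_0\nu(p)$, which is precisely the normalization used in Lemma \ref{del}.

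Next I would reuse the third-derivative computations of Lemma \ref{del}. The derivations there of $\psi_{uuu}(p)=0$ and $\psi_{uuv}(p)=\hat{G}(p)\hat{L}(p)/\hat{N}(p)$ use only $f_u(p)=\bm{0}$, the frame relations $\langle\nu,f_v\rangle=\langle\nu,\phi\rangle=0$, and the value $t_0=\hat{G}(p)/\hat{N}(p)$; none of them requires $f$ to be a front. Substituting $\hat{L}(p)=0$ gives $\psi_{uuu}(p)=\psi_{uuv}(p)=0$. Inspecting the five monomials in \eqref{delta}, each carries a factor of $\psi_{uuu}$ or $\psi_{uuv}$, so $\Delta_\psi=0$ at $p$. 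Since a $D_4$ singularity requires $\Delta_\psi\neq0$, the function $\psi$ cannot have a $D_4$ singularity at $p$; moreover, by the same computation as in Proposition \ref{rankzero} (which again uses only $t_0=\hat{G}(p)/\hat{N}(p)$) one still has $j^2\psi=0$, so the degeneracy is strictly more degenerate than $D_4$.

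The main obstacle, and really the only delicate point, is the passage from the hypothesis ``frontal but not a front'' to the pointwise vanishing $\hat{L}(p)=0$: this rests on interpreting the front-criterion $\mu_c\neq0$ of \cite[Proposition 3.2]{msuy} as a germ statement at $p$. A secondary convenience worth recording is that one need not evaluate $\psi_{uvv}(p)$ or $\psi_{vvv}(p)$ at all; their Lemma \ref{del} expressions involve division by $\hat{L}$ and would be singular here, but the vanishing of the two leading third derivatives $\psi_{uuu}$ and $\psi_{uuv}$ already annihilates $\Delta_\psi$, so those problematic quantities never enter the argument.
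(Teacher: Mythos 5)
Your proposal is correct and follows essentially the same route as the paper, whose proof is just the one-line observation that, by the computations in Lemma \ref{del}, $\Delta_\psi$ vanishes automatically when $f$ is not a front at $p$; you have filled in exactly the implicit chain (not a front at $p$ $\Rightarrow$ $\mu_c(p)=0$ $\Rightarrow$ $\hat{L}(p)=0$ $\Rightarrow$ $\psi_{uuu}(p)=\psi_{uuv}(p)=0$ $\Rightarrow$ every monomial of \eqref{delta} vanishes), together with the correct normalization $t_0=1/\kappa_\nu(p)=\hat{G}(p)/\hat{N}(p)$ via Lemma \ref{limnormalsec}. One negligible slip: the Lemma \ref{del} formula $\psi_{uvv}(p)=2\hat{G}\hat{M}/\hat{N}$ does not actually involve division by $\hat{L}$ (only $\psi_{vvv}$ does), but since your argument never uses either quantity this does not affect anything.
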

\begin{proof}
Let us take an adapted coordinate system $(U;u,v)$ centered at $p$ with the null vector field 
$\eta=\partial_u+\epsilon(u)\partial_v$. 
Since $f$ at $p$ is a frontal but not a front, $\what{L}(p)=0$. 
Thus $\psi_{uuv}(p)=0$ by \eqref{psi_uuv}. 
By the proof of Lemma \ref{del}, $\Delta_\psi$ vanishes automatically if $f$ is not a front at $p$. 
\end{proof}

\proof[Acknowledgements] 
The author thanks Professor Kentaro Saji 
for fruitful discussions and valuable comments. 
He also thanks the referee for reading the manuscript carefully and helpful suggestions.

\end{document}